\newtheorem{theorem}{Theorem}
\newtheorem{cor}{Corollary}
\newtheorem{lemma}{Lemma}
\theoremstyle{definition}
\newtheorem*{remark}{Remark}
\theoremstyle{remark}
\DeclareMathAlphabet{\mathpzc}{OT1}{pzc}{m}{it}
\newcommand{\nat}{\mathbb N}			
\newcommand{\ff}[1]{{\mathbb F}_{#1}}		
\newcommand{\ffs}[1]{{\mathbb F}_{#1}^\star}	
\newcommand{\ffx}[1]{\ff{#1}[x]}		
\newcommand{\gen}{\mathfrak{z}}			
\newcommand{\cyc}[1]{\langle #1 \rangle}	
\date{}
\begin{document}

\title{Bounds on the differential uniformity of the Wan-Lidl polynomials}
\author{Li-An Chen and Robert S. Coulter}
\address{Department of Mathematical Sciences\\
University of Delaware\\
Newark DE 19716, USA}

\begin{abstract}
We study the differential uniformity of the Wan-Lidl polynomials over finite
fields.
A general upper bound, independent of the order of the field, is established.
Additional bounds are established in settings where one of the parameters is
restricted. In particular, we establish a class of permutation polynomials
which have differential uniformity at most 5 over fields of order $3\bmod 4$,
irrespective of the field size.
Computational results are also given.
\end{abstract}

\maketitle

\section{Introduction and The Main Results}

Throughout this paper $\ff{q}$ denotes the finite field of order $q$, with
$q=p^e$ for some prime $p$ and $e\in\nat$, and $\ffs{q}$ denotes the nonzero elements of $\ff{q}$.
We use $\gen$ to denote a primitive element of $\ff{q}$. 
It follows from Lagrange Interpolation and counting that any function on
$\ff{q}$ can be represented uniquely by a polynomial in $\ffx{q}$ of degree
less than $q$.
A polynomial $f\in\ffx{q}$ is called a \textit{permutation polynomial (PP)} over
$\ff{q}$ if the evaluation map $c\mapsto f(c)$ is a bijection on $\ff{q}$.
An easy to prove class of examples come from the monomials:
$x^d$ is a PP over $\ff{q}$ if and only if $\gcd(d,q-1)=1$.
Permutation polynomials have been studied extensively for decades.
A broad introduction is given by Lidl and Niederreiter
\cite{blidl83}, Chapter 7, and there
are the two classical survey articles of Lidl and Mullen \cite{lidl88,lidl93}.
For a more recent survey, see Hou \cite{hou15b}. 

This paper is concerned with an important property of functions known as
differential uniformity.
Let $f\in\ffx{q}$ and $a\in\ffs{q}$. The
{\em differential operator of $f$ in the direction of $a$} is the
function $\Delta_{f,a}(x):=f(x+a)-f(x)$.
The \textit{differential uniformity (DU)} of $f$ is defined by 
\[\delta_{f}=\max_{a\in\ffs{q},c\in\ff{q}}|\{x\in\ff{q}\mid \Delta_{f,a}(x)=c\}|.\]
The lower the DU of a function, the more resistant the function is to
differential attacks when used as an S-box.
Functions with optimal DU are called \textit{almost perfect nonlinear (APN)} over fields of characteristic $2$ (with $2$-DU), and \textit{planar} over fields of odd
characteristic (with $1$-DU). 

PPs with optimal differential uniformity are highly desirable.
Over fields of order $2^e$, there are a number of classes of APN PPs known when
$e$ is odd, but when $e$ is even we have only a single example, found
by Browning, Dillon, McQuistan and Wolfe \cite{bro10} for $e=6$.
If one weakens the requirement to constructing PPs with near-optimal DU, then further examples are known, a recent example being the 6-DU permutations
constructed by Calderini \cite{cal21}.

In odd characteristic, it is actually impossible for a planar function to be
a PP. Indeed, Coulter and Senger \cite{coulter14c} showed that the image set
of a planar function over $\ff{q}$ can be no larger than roughly
$q-\sqrt{q}$. Thus, in odd characteristic, the problem becomes that of finding
PPs with near-optimal DU. Some monomial examples were established in 1997.
Helleseth and Sandberg \cite{helleseth97} showed that the
monomials $x^d$ with $d=(p^e+3)/2$, $p\ne 3$ and $p^e\equiv 3\bmod 4$, are
4-DU over $\ff{p^e}$, see \cite{helleseth97}, Theorem 3. These monomials are
always PPs over $\ff{p^e}$ under these conditions.

Here we focus on a class of PPs first classified by Wan and Lidl in 1991.
A {\em Wan-Lidl polynomial} is any polynomial of the
form $x^s h(x^{(q-1)/d})$ with $h\in\ffx{q}$, $s,d\in\nat$, and $d\mid (q-1)$.
Wan and Lidl gave necessary and sufficient conditions for polynomials of this
form to be PPs in \cite{wan91}, see Theorem \ref{th:WanLidl} below.
Here we give several results concerning the differential uniformity of these
polynomials.
Our main result gives a general upper bound on the DU of a Wan-Lidl polynomial.
\begin{theorem}\label{th:DU_general_UB}
Let $s,d\in\nat$ with $s>1$ and $d\mid (q-1)$.
Let $h\in\ffx{q}$, $T(x)=x^{(q-1)/d}$, and set $f(x)=x^s h(T(x))$. Then $\delta_f\le d(sd-1)+2$. 
\end{theorem}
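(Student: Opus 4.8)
The plan is to exploit the coset structure forced by $T$. Since $T(x)=x^{(q-1)/d}$ is a homomorphism from $\ffs{q}$ onto the group $\mu_d$ of $d$-th roots of unity, with kernel of order $(q-1)/d$, the value $h(T(x))$ is constant on each of the $d$ cosets of that kernel; so on the coset where $T(x)=\zeta$ the function $f$ agrees with the monomial $h(\zeta)x^s$. Fix $a\in\ffs{q}$ and $c\in\ff{q}$; we count $x$ with $\Delta_{f,a}(x)=c$. First dispose of $x=0$ and $x=-a$: since $s>1$ we have $f(0)=0$, so $x=0$ contributes a solution only if $f(a)=c$, and $x=-a$ only if $f(-a)=-c$; together these account for at most $2$ of the count. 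Every remaining $x$ satisfies $x\neq 0$ and $x+a\neq 0$, so $\zeta:=T(x)\in\mu_d$ and $\eta:=T(x+a)\in\mu_d$ are well defined, and $x$ lies in exactly one of the $d^2$ classes indexed by such pairs $(\zeta,\eta)$.

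On the class indexed by $(\zeta,\eta)$ the equation $\Delta_{f,a}(x)=c$ becomes
\[
P_{\zeta,\eta}(x):=h(\eta)(x+a)^s-h(\zeta)x^s-c=0 ,
\]
so the number of solutions in that class is at most the number of roots of $P_{\zeta,\eta}$. When $h(\zeta)\neq h(\eta)$, the coefficient of $x^s$ is $h(\eta)-h(\zeta)\neq 0$, so $P_{\zeta,\eta}$ has degree exactly $s$ and thus at most $s$ roots. When $h(\zeta)=h(\eta)$, the $x^s$-terms cancel and $P_{\zeta,\eta}(x)=h(\zeta)\bigl((x+a)^s-x^s\bigr)-c$ has degree at most $s-1$, hence at most $s-1$ roots, \emph{provided} it is not the zero polynomial.

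Now sum over the $d^2$ classes. Let $E=\{(\zeta,\eta)\in\mu_d\times\mu_d : h(\zeta)=h(\eta)\}$. Assuming for the moment that $P_{\zeta,\eta}\not\equiv 0$ whenever $h(\zeta)=h(\eta)$, the total number of solutions with $x\notin\{0,-a\}$ is at most $s(d^2-|E|)+(s-1)|E|=sd^2-|E|$. Since $E$ contains the whole diagonal of $\mu_d\times\mu_d$, $|E|\geq d$, and adding the at most $2$ solutions from $x\in\{0,-a\}$ gives $\delta_f\leq sd^2-d+2=d(sd-1)+2$, as claimed. (One even gets a sharper bound when $h$ is not injective on $\mu_d$, since then $|E|>d$.) Note that every estimate here is uniform in $q$, which is precisely why the resulting bound is independent of the field order.

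The one point requiring care — and the step I expect to be the main obstacle — is justifying the proviso: one needs $(x+a)^s-x^s$ to be a non-constant polynomial, for otherwise $P_{\zeta,\eta}$ could vanish identically and the corresponding class would contribute its full (possibly large) size. Writing $(x+a)^s-x^s=\sum_{k=1}^{s}\binom{s}{k}a^k x^{s-k}$, this polynomial has constant term $a^s\neq 0$, so it is never the zero polynomial, and by Lucas' theorem it fails to be non-constant exactly when $\binom{s}{k}\equiv 0\pmod p$ for all $1\leq k\leq s-1$, i.e.\ exactly when $s$ is a power of $p$. The hypothesis $s>1$ rules out $s=1$; the residual cases $s\in\{p,p^2,\dots\}$ — where on each coset $f$ is a scalar times a power of the Frobenius map and is therefore additive there — are the delicate ones, and are where one needs either an additional hypothesis (such as $p\nmid s$) or a separate argument.
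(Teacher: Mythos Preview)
Your argument follows the same decomposition as the paper's: remove $x\in\{0,-a\}$ (at most two solutions), partition the remaining $x$ by the pair $(T(x),T(x+a))\in\mu_d\times\mu_d$, and bound each class by the degree of the resulting univariate polynomial in $x$. The paper splits on whether $\zeta=\eta$ (its Cases~1 and~2), whereas you split on whether $h(\zeta)=h(\eta)$; this is a harmless refinement that even yields a slightly sharper count when $h$ is not injective on $\mu_d$, but the method is identical.

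Where you go beyond the paper is in flagging the degeneracy. The paper simply asserts that a polynomial ``of degree at most $s-1$'' has at most $s-1$ roots, without checking that it is not the zero polynomial. You correctly observe that when $s$ is a power of $p$ the difference $(x+a)^s-x^s$ collapses to the constant $a^s$, so for $c=h(\zeta)a^s$ the polynomial $P_{\zeta,\zeta}$ vanishes identically and the corresponding class contributes its full (possibly $\sim q/d^2$) size; for $q$ large this really does break the bound, so the theorem as stated needs a hypothesis such as $p\nmid s$. There is a second degeneracy you do not mention: even when $(x+a)^s-x^s$ is non-constant, if $h(\zeta)=0$ for some $\zeta\in\mu_d$ (not excluded by the hypotheses) then $P_{\zeta,\zeta}=-c$ is identically zero for $c=0$; in the extreme case $h\equiv 0$ one gets $f\equiv 0$ and $\delta_f=q$. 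The paper sidesteps both issues only implicitly --- its later results impose $h(\lambda)\neq 0$ on $H$ via condition (WL\,2), and the remark after its key lemma assumes $p\nmid s$ --- but neither hypothesis appears in Theorem~1 itself. Your caution is therefore well placed: the proof (yours and the paper's) is complete and correct under the mild extra assumptions $p\nmid s$ and $h(\lambda)\neq 0$ for all $\lambda\in\mu_d$, but not in full generality.
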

Note that this bound does not require $f$ to be a PP.
The proof is based on a worst-case scenario which we believe rarely occurs, so
the bound is almost certainly not tight in many cases.
Specialising, we fix the parameter $d=2$ and prove the following result,
establishing an infinite class of binomial permutations with DU at most $5$.
\begin{theorem}\label{th:DU_sEven_d2}
Let $q\equiv 3\pmod 4$, $s\in\nat$ be even. Let $h(x)=x+b$ for some $b\in\ffs{q}$ and $T(x)=x^{(q-1)/2}$. If $f(x)=x^s h(T(x))$ is a PP over $\ff{q}$, then $\delta_f\le 4s-3$. 
\end{theorem}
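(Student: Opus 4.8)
The plan is to reduce the question to counting the roots of one explicit polynomial of controlled degree. First I would record the easy reductions. Since $f$ is a permutation, $\Delta_{f,a}$ never takes the value $0$ for $a\ne 0$, so I may assume $c\ne 0$. Writing $\chi(x)=x^{(q-1)/2}$ for the quadratic character (extended by $\chi(0)=0$) and using that $q\equiv 3\pmod 4$ gives $\chi(-1)=-1$, I have $f(x)=x^{s}(\chi(x)+b)$, so $f$ acts as $x\mapsto\alpha x^{s}$ on the set $S$ of nonzero squares and as $x\mapsto\beta x^{s}$ on the set $N$ of nonsquares, where $\alpha=b+1$, $\beta=b-1$; note $\alpha-\beta=2\ne 0$ and $\alpha\ne-\beta$ because $b\ne0$. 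The Wan--Lidl criterion (Theorem \ref{th:WanLidl}) with $d=2$ forces $\gcd(s,(q-1)/2)=1$ and $\chi(\alpha\beta)=-1$, and, together with $s$ even, this makes $x\mapsto x^{s}$ a bijection of $S$ onto $S$ and of $N$ onto $S$. I will also use the identity $f(-x)=2bx^{s}-f(x)$ (equivalently $f(x)f(-x)=(b^{2}-1)x^{2s}$).

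Next I would split $N(a,c)=|\{x:\Delta_{f,a}(x)=c\}|$ according to the pair $(\chi(x),\chi(x+a))$, treating the ``boundary'' points $x=0$ and $x=-a$ separately. On each of the four genuine regions the equation becomes a polynomial identity: on $S\times S$ and $N\times N$ it reads $\alpha\bigl((x+a)^{s}-x^{s}\bigr)=c$, resp. $\beta\bigl((x+a)^{s}-x^{s}\bigr)=c$, of degree $\le s-1$; on $S\times N$ and $N\times S$ it reads $\beta(x+a)^{s}-\alpha x^{s}=c$, resp. $\alpha(x+a)^{s}-\beta x^{s}=c$, of degree exactly $s$ (leading coefficient $\beta-\alpha=-2$, resp. $\alpha-\beta=2$). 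A convenient packaging is that every solution — the boundary points included — is a zero of the single polynomial
\[
\Psi(X)=\bigl(\alpha R-c\bigr)\bigl(\beta R-c\bigr)\bigl(\beta (X+a)^{s}-\alpha X^{s}-c\bigr)\bigl(\alpha (X+a)^{s}-\beta X^{s}-c\bigr),\qquad R=(X+a)^{s}-X^{s},
\]
whose degree is $\le 4s-2$, and in fact $\le 4s-2p$ when $p\mid s$ since then $\deg R\le s-p$. This already gives $\delta_f\le 4s-2$, and settles the case $p\mid s$ outright.

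To recover the last unit I would argue two points. First, the boundary can never cost a full $2$: $x=0$ is a solution only if $c=f(a)$ and $x=-a$ only if $c=-f(-a)$, and since $f(a)\in\{\pm\alpha a^{s},\pm\beta a^{s}\}$ and $-f(-a)\in\{\pm\alpha a^{s},\pm\beta a^{s}\}$ with $\alpha\ne-\beta$, these two conditions are incompatible for a fixed $c$. Second, $\Psi$ is always strictly worse than its degree predicts. Computing the pairwise ``common zero'' conditions among the four factors of $\Psi$, I expect to find that any two of them share a zero only when $c\in\{\pm\alpha a^{s},\pm\beta a^{s}\}$, in which case the shared zero is $0$ or $-a$, so $\Psi$ has a repeated root and at most $4s-3$ distinct roots (and in two of these four sub-cases the repeated root is not itself a solution, costing a further unit). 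When $c\notin\{\pm\alpha a^{s},\pm\beta a^{s}\}$ I would instead exhibit a \emph{spurious} root of $\Psi$, i.e.\ a zero $x_{0}$ of one factor at which the realised pair $(\chi(x_{0}),\chi(x_{0}+a))$ forces $\Delta_{f,a}(x_{0})\ne c$; here the involution $x\mapsto -x-a$ (under which $R$ is odd and $(x+a)^{s}+x^{s}$ is even), the identity $f(-x)=2bx^{s}-f(x)$, and $q\equiv 3\pmod 4$ should combine to pin down the quadratic character of each factor of $\Psi$ at the zeros of the others, producing the extra lost root.

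The routine part is the bookkeeping for the boundary and for the range $c\in\{\pm\alpha a^{s},\pm\beta a^{s}\}$ once $\Psi$ is in hand. The hard part will be guaranteeing the extra spurious (or repeated) root uniformly in the generic range $c\notin\{\pm\alpha a^{s},\pm\beta a^{s}\}$, since there the naive degree count stops one short of $4s-3$; making the character argument above watertight — and in particular showing it does not fail for any admissible choice of $a,b,c$ — is where I expect the real work to lie.
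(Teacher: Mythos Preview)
Your setup is sound and matches the paper's: the four-region decomposition, the degree counts giving the initial bound $4s-2$, the observation that the two boundary points $0$ and $-a$ cannot both be solutions (since that forces $b=0$), and the check that two factors of $\Psi$ share a zero only at $0$ or $-a$ for the special values $c\in\{\pm\alpha a^{s},\pm\beta a^{s}\}$. The treatment of those special values is essentially correct.

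The genuine gap is the generic case $c\notin\{\pm\alpha a^{s},\pm\beta a^{s}\}$, which you yourself flag as ``where the real work lies.'' The tools you propose do not produce the needed spurious root. The involution $x\mapsto -x-a$ together with your identity $f(-x)=2bx^{s}-f(x)$ gives
\[
\Delta_{f,a}(-x-a)=\Delta_{f,a}(x)-2bR(x),
\]
so it transports solutions of $\Delta_{f,a}=c$ to solutions of $\Delta_{f,a}=c-2bR(x)$, a \emph{different} target value; it says nothing about the character pattern of a root of one factor of $\Psi$ relative to the \emph{same} $c$. Likewise, ``pinning down the quadratic character of each factor of $\Psi$ at the zeros of the others'' is not enough: in the generic case the four factors have no common zeros, so evaluating one factor at a zero of another gives a nonzero value whose character carries no obvious information about $(\chi(x_0),\chi(x_0+a))$.

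The paper's missing idea is this. Assume, for contradiction, that each of the four factors achieves its maximal number of roots \emph{and} that every root satisfies the intended character pair. For each factor, take the product of all its roots and the product of all its shifted roots $x_i+a$ (Vieta at $X=0$ and $X=-a$); these products are explicit rational expressions in $a,c,\alpha,\beta$. Now apply $\chi$ to those products: by hypothesis $\chi\bigl(\prod x_i\bigr)=\mu^{s-1}$ or $\mu^{s}$ and $\chi\bigl(\prod(x_i+a)\bigr)=\lambda^{s-1}$ or $\lambda^{s}$, according to the case. Combining the resulting four character identities and using $\chi(-1)=-1$, the unknown quantities $\chi(\alpha a^{s}-c)$, $\chi(\beta a^{s}+c)$, etc.\ cancel, and one is left with $\chi(\alpha)=\chi(\beta)$, i.e.\ $\chi(b+1)=\chi(b-1)$. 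This contradicts (WL3) of the Wan--Lidl criterion. Hence at least one factor has a root violating its character constraint, which is exactly the spurious root you need. The crucial step you are missing is to look at the character of the \emph{product} of all roots in each region and to feed the resulting identities back into the permutation hypothesis.
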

In particular, when $s=2$ and $s=4$, the bound in Theorem \ref{th:DU_sEven_d2}
gives $\delta_f\le 5$ and $\delta_f\le 13$, respectively.
The bound has been shown to be tight for $s=2$ and $s=4$ using the
Magma algebra system \cite{magma}, though it would appear that the two cases
are very different. When $s=2$, it seems the bound is always tight for
fields of order larger than 59, while for $s=4$, we have found only one example
where the bound is met, over the field of order 3671.
Additionally, computational evidence led us to proving the following corollary,
which gives an infinite class of PPs having DU at most 4.
\begin{cor}\label{co:DU_s2d2b3}
Let $q\equiv 3\pmod 8$. Then $f(x)=x^2 (x^{\frac{q-1}{2}}\pm 3)$ is a PP over $\ff{q}$, and $\delta_f\le 4$. 
\end{cor}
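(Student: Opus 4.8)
The plan is to obtain the permutation property from the criterion of Wan and Lidl, and then to sharpen the argument behind Theorem~\ref{th:DU_sEven_d2} in the case $s=2$, using the extra arithmetic available when $q\equiv 3\pmod 8$. Write $\chi$ for the quadratic character of $\ff q$. The hypothesis $q\equiv 3\pmod 8$ (rather than merely $q\equiv 3\pmod 4$) will be used only through the elementary facts that then $\chi(-1)=\chi(2)=-1$, and hence $\chi(4)=1$, $\chi(-4)=-1$, $\chi(-2)=1$ and $\chi(8)=\chi(2)^3=-1$. For the permutation claim, here $d=2$, $h(x)=x+b$ with $b=\pm 3$, and $T(x)=x^{(q-1)/2}$; since $q\equiv 3\pmod 4$ the integer $(q-1)/2$ is odd, so $\gcd(2,(q-1)/2)=1$, and the auxiliary map $x\mapsto x^2h(x)^{(q-1)/2}$ sends $1\mapsto\chi(1+b)$ and $-1\mapsto\chi(b-1)$, i.e.\ $1\mapsto\chi(4)=1$, $-1\mapsto\chi(2)=-1$ when $b=3$, and $1\mapsto\chi(-2)=1$, $-1\mapsto\chi(-4)=-1$ when $b=-3$. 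In either case this permutes $\{1,-1\}$, so $f$ is a PP by Theorem~\ref{th:WanLidl}.

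For the differential bound, observe that for $x\neq 0$ one has $f(x)=(b+1)x^2$ if $x$ is a square and $f(x)=(b-1)x^2$ if $x$ is a non-square, and, counting $0$ among the squares, $f(0)=0$ also obeys the first formula. Because $f(ax)$ equals $a^2 f(x)$ or $a^2 f(-x)$ according as $\chi(a)=1$ or $\chi(a)=-1$, the substitution $x\mapsto ax$ reduces matters to bounding $\#\{x\in\ff q:\Delta_{f,1}(x)=c\}$ for each $c$. Partition $\ff q$ into four pieces according to whether each of $x$ and $x+1$ is a square (with $0$ regarded as a square). On the two pieces where $x$ and $x+1$ have the same type, $\Delta_{f,1}(x)=c$ is linear with a unique solution $\sigma$ (both squares) or $\tau$ (both non-squares); on the other two pieces it is a genuine quadratic, $P(x)=0$ on ``$x$ square, $x+1$ non-square'' and $R(x)=0$ on ``$x$ non-square, $x+1$ square''. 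Since these four pieces partition $\ff q$, the number of solutions of $\Delta_{f,1}(x)=c$ equals the sum of the contributions of $\sigma$, $\tau$, and of the roots of $P$ and of $R$ that lie in their respective pieces; this is at most $4$ unless $P$ or $R$ has two distinct roots in $\ff q$.

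The core of the proof is to remove a linear solution whenever a quadratic contributes two solutions. Taking $b=3$, a direct computation gives $\sigma=(c-4)/8$, $\tau=(c-2)/4$, and discriminants $2(4-c)$ for $P$ and $2(4+c)$ for $R$. If $P$ has two distinct roots in $\ff q$, then $2(4-c)$ is a nonzero square, so $\chi(4-c)=\chi(2)^{-1}=-1$; consequently $\chi(\sigma)=\chi(-(4-c))\,\chi(8)^{-1}=(-1)(-1)(-1)=-1$, so $\sigma$ is a non-square and cannot lie on the ``both squares'' piece. If instead both roots of $R$ are genuine solutions, then they are non-squares with square successors, so the product of the two successors --- which one computes to be $-(c+2)/2$ --- is a square, forcing $\chi(c+2)=1$; hence $\chi(\tau+1)=\chi(c+2)\,\chi(4)^{-1}=1$, so $\tau+1$ is a square and $\tau$ cannot lie on the ``both non-squares'' piece. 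Feeding this back: if $P$ has two distinct roots then $\sigma$ is dead and the count is at most $0+1+2+(\text{contribution of }R)$, which is at most $4$ --- using the second observation when $R$ contributes $2$; the symmetric case in which only $R$ contributes two solutions kills $\tau$ and gives at most $1+0+1+2=4$; and every other configuration is bounded by $4$ at once. Thus $\delta_f\le 4$.

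The obstacle I foresee is organisational rather than conceptual. One must check that the four pieces genuinely partition $\ff q$ with no solution missed or double counted (the points $x=0$ and $x=-1$ deserve an explicit word), and then handle the finitely many exceptional $c$: those with $c=\pm 4$, where a quadratic acquires a repeated root, and those for which $\sigma$, $\tau$, or a root of $P$ or $R$ equals $0$ or $-1$; in each case the relevant piece simply contributes fewer points and a short direct verification preserves the bound. The case $b=-3$ runs in exactly the same way, with the roles of the two linear pieces interchanged (now the discriminant $2(4-c)$ governs $\tau$ and $2(4+c)$ governs $\sigma$). Finally, when $p=3$ one has $b=\pm 3\equiv 0$ and $f$ reduces to the monomial $x^{(q+3)/2}$; the same decomposition applies with the constants $b\pm 1$ replaced by $1$ and $-1$, and the removal step, still resting on $\chi(2)=-1$, continues to go through.
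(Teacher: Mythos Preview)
Your proposal is correct and follows essentially the same approach as the paper's proof: partition according to the quadratic characters of $x$ and $x+1$, obtain one linear and one quadratic equation in each pair, and use character identities (from the discriminant on one side, from the product of shifted roots on the other) to show that the pairs $\{\sigma,P\}$ and $\{\tau,R\}$ can each contribute at most two solutions. The paper organises the same computations through its Lemma~\ref{lm:prod_roots} (equations~(\ref{eq:T_roots_case1})--(\ref{eq:T_roots+a_case2})) rather than by direct calculation, and it disposes of $b=-3$ via the linear equivalence $x^s(\eta(x)-b)=-f(-x)$ rather than by rerunning the argument; your explicit treatment of the characteristic~$3$ case is a point the paper leaves implicit.
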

Though the evidence is not particularly strong, it is possible that the PPs of this corollary form the only infinite class of Wan-Lidl PPs with a differential uniformity of $4$. In subsequent computing for $s\in\{4,6\}$ we stopped finding Wan-Lidl PPs with a DU of 4 when the field size got large enough. 

Similarly, by fixing the parameter $s=2$, we obtain the following result.
\begin{theorem}\label{th:DU_s2_dEven}
Let $q$ be odd, $d\in\nat$ be even and $(q-1)/d$ be odd. Let $h\in\ffx{q}$
and $T(x)=x^{(q-1)/d}$. If $f(x)=x^2 h(T(x))$ is a PP over $\ff{q}$, then $\delta_f\le 2d^2-\frac{3}{2}d$. 
\end{theorem}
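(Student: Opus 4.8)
The plan is to run the coset decomposition underlying Theorem~\ref{th:DU_general_UB} and then squeeze out the extra saving of $\tfrac d2+2$ from the parity hypotheses. Let $C_0\le\ffs{q}$ be the subgroup of order $m:=(q-1)/d$ and put $C_i:=\gen^iC_0$, so that $\ffs{q}=\bigcup_{i=0}^{d-1}C_i$, the map $T$ is constant on $C_i$ (with value $\omega_i:=\gen^{im}$), and $f$ restricts to $x\mapsto\alpha_ix^2$ on $C_i$, where $\alpha_i:=h(\omega_i)$. Since $f$ is a PP no coset is collapsed, so every $\alpha_i\ne0$; moreover $f(C_i)=\alpha_i\,C_{2i\bmod d}$, and because $m$ is odd squaring is a bijection of $C_i$ onto $C_{2i\bmod d}$, so the sets $\alpha_i\,C_{2i\bmod d}$ form a partition of $\ffs{q}$ into cosets of $C_0$. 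In particular $\alpha_i\ne\alpha_{i+d/2}$ for every $i$ (the cosets $C_{2i}$ and $C_{2(i+d/2)}$ coincide). Two further facts forced by $m$ odd are used throughout: $-1\in C_{d/2}$, hence $-C_i=C_{i+d/2}$; and $C_0\subseteq(\ffs{q})^2$ (since $d$ is even), hence every element of $C_i$ has quadratic character $(-1)^i$.

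Fix $a\in\ffs{q}$, say $a\in C_k$, and $c\in\ff{q}$; let $N$ be the number of $x\in\ff{q}$ with $\Delta_{f,a}(x)=c$. For $x\notin\{0,-a\}$ let $i=i(x)$ and $j=i(x+a)$ be the coset indices of $x$ and $x+a$; then $\Delta_{f,a}(x)=c$ says precisely that $x$ is a root, lying in $C_i$ with $x+a\in C_j$, of the quadratic $P_{i,j}(X):=(\alpha_j-\alpha_i)X^2+2a\alpha_jX+(\alpha_ja^2-c)$. Writing $N_{i,j}$ for the number of such $x$ and $N_\partial$ for the contribution of $x\in\{0,-a\}$, we get $N=N_\partial+\sum_{i,j}N_{i,j}$ with $N_\partial\le2$, with $N_{i,j}\le1$ when $\alpha_i=\alpha_j$ ($P_{i,j}$ linear) and $N_{i,j}\le2$ otherwise; summing this reproves the bound $d(2d-1)+2$ of Theorem~\ref{th:DU_general_UB}. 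Let $t$ be the number of ordered pairs $(i,j)$ with $\alpha_i=\alpha_j$, so $t\ge d$ and the crude bound reads $\sum_{i,j}N_{i,j}\le2d^2-t\le2d^2-d$.

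If $N_\partial\ge1$ the target bound is cheap. A solution $x=0$ means $c=\Delta_{f,a}(0)=f(a)=\alpha_ka^2$, and then $P_{i,k}(0)=\alpha_ka^2-c=0$ for every $i$: in the $k$th column the useless root $0\notin C_i$ kills a unit in each cell, so that column contributes at most $\#\{i:\alpha_i\ne\alpha_k\}=d-t_k$ (here $t_k:=\#\{i:\alpha_i=\alpha_k\}\ge1$) rather than $2d-t_k$. A solution $x=-a$ means $c=-f(-a)=-\alpha_{k+d/2}a^2$ (as $-a\in C_{k+d/2}$) and is handled symmetrically by the $(k+d/2)$th row, where $P_{k+d/2,j}(-a)=0$ for all $j$ and $-a+a=0\notin C_j$. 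Both degradations occur at once only when $\alpha_{k+d/2}=-\alpha_k$, in which case the common cell contributes $0$ and $N_\partial=2$ is affordable. Carrying this through and using $t\ge d$ gives $N\le2d^2-2d+1$, which is $\le2d^2-\tfrac32 d$ for every even $d\ge2$.

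It remains to treat the principal case $N_\partial=0$, where we must improve $2d^2-d$ by a further $\tfrac d2$. If $c=\pm\alpha_ia^2$ for some $i$ the column/row-degradation argument above still applies and saves at least $d$, so we may assume $c\ne\pm\alpha_ia^2$ for all $i$; then every quantity below is nonzero. Call a cell $(i,j)$ \emph{full} if $N_{i,j}=2$; this forces $\alpha_i\ne\alpha_j$, the discriminant $\alpha_i\alpha_ja^2+(\alpha_j-\alpha_i)c$ to be a nonzero square, and—since the two roots $x_1,x_2$ lie in $C_i$ while $x_1+a,x_2+a$ lie in $C_j$, and $C_i$, $C_j$ each carry a single quadratic character—prescribed quadratic-character identities among $\chi(c\pm\alpha_ia^2)$, $\chi(\alpha_ja^2-c)$ and $\chi(\alpha_i-\alpha_j)$, obtained from $x_1x_2=\tfrac{\alpha_ja^2-c}{\alpha_j-\alpha_i}$ and $(x_1+a)(x_2+a)=\tfrac{c+\alpha_ia^2}{\alpha_i-\alpha_j}$. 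The idea is then to pair each coset $C_i$ with its negative $C_{i+d/2}$: since $\alpha_i\ne\alpha_{i+d/2}$ and $-C_i=C_{i+d/2}$, I would compare the full‑ness requirements on the $2d$ cells lying in columns $j$ and $j+d/2$ and show they cannot all hold simultaneously, so at least one quadratic cell in each of the $d/2$ column‑pairs fails to be full. Adding these $d/2$ losses to the crude bound gives $\sum_{i,j}N_{i,j}\le2d^2-\tfrac32 d$, completing the proof. I expect this last step to be the real work: identifying exactly which combinations of quadratic characters are compatible with a cell being full, and then showing that these combinations genuinely conflict across each antipodal pair $\{C_i,C_{i+d/2}\}$—where the sign $\chi(-1)=(-1)^{d/2}$ intervenes and behaves differently according as $d\equiv0$ or $2\pmod4$—is delicate, and is precisely where the hypothesis that $(q-1)/d$ is odd gets consumed.
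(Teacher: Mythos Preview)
Your coset setup and the boundary analysis ($N_\partial\ge1$) are fine and in fact slightly sharper than the paper's corresponding computation. The problem is the principal case $N_\partial=0$: you explicitly leave the decisive step undone (``I would compare\dots\ I expect this last step to be the real work''), and the tool you propose for it is almost certainly too weak. You plan to derive a contradiction from the \emph{quadratic} character $\chi$ of the products $x_1x_2$ and $(x_1+a)(x_2+a)$; but $\chi(x_1x_2)=1$ for \emph{every} full cell (both roots lie in the same $C_i$, so the product lands in $C_{2i}\subset(\ffs{q})^2$), and when $d\equiv0\pmod4$ one even has $\chi(-1)=1$, so $C_i$ and $C_{i+d/2}$ carry the \emph{same} quadratic character and $\chi$ cannot tell them apart. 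More tellingly, your principal-case sketch never invokes the permutation condition (WL~3) beyond the mild consequence $\alpha_i\ne\alpha_{i+d/2}$; since the bound is false for non-PP Wan--Lidl polynomials, any correct argument must use PP in an essential way at this point.

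The paper closes this gap differently. It does not look at column-pairs but at the four cells
\[
(\lambda,\lambda),\quad(-\lambda,-\lambda),\quad(\lambda,-\lambda),\quad(-\lambda,\lambda)
\]
for each pair $\{\lambda,-\lambda\}\subset H$, and applies the \emph{full} map $T$ (not just $\chi$) to the product-of-roots identities of Lemma~\ref{lm:prod_roots}. Because $(q-1)/d$ is odd one has $T(-1)=-1=(-\lambda/\lambda)^{s-1}$, and chaining the $T$-identities from the linear case $(\lambda,\lambda)$ with those from the quadratic case $(\lambda,-\lambda)$ (and symmetrically for $-\lambda$) yields, when all four cells are maximal, the equality $\lambda^{2}T(h(\lambda))=(-\lambda)^{2}T(h(-\lambda))$. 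That is exactly what (WL~3) forbids. Hence at least one of these four cells falls short of its crude bound for every one of the $d/2$ antipodal pairs, which is the missing saving of $d/2$. The key ingredients you are missing are therefore (i) working with $T$-values in $H$ rather than with $\chi$, (ii) combining a diagonal (linear) cell with an off-diagonal (quadratic) cell rather than comparing only quadratic cells across columns, and (iii) feeding the resulting identity into (WL~3) to produce the contradiction.
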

The paper is organized as follows.
In Section \ref{sc:WLPPs}, we recall the PP classification of Wan-Lidl
polynomials obtained by Wan and Lidl, and explain why we believed these
polynomials warranted further investigation with regard to their DU.
In Section \ref{sc:general}, we prove our general result, Theorem
\ref{th:DU_general_UB}.
In Section \ref{sc:special}, we prove DU bounds for some special cases of
Wan-Lidl PPs, namely, Theorem \ref{th:DU_sEven_d2}, Corollary
\ref{co:DU_s2d2b3}, and Theorem \ref{th:DU_s2_dEven}.
All of these three results rely on a key lemma, which we establish first.
Finally, in Section \ref{sc:data}, we present computational data for the DU of
the Wan-Lidl PPs of the form described in Theorem \ref{th:DU_sEven_d2} over
some prime fields $\ff{p}$, and for some small values of $s$.

\section{The Wan-Lidl PPs}\label{sc:WLPPs}

In \cite{wan91}, Wan and Lidl studied the permutation behaviour of 
polynomials of the form $x^s h(x^{(q-1)/d})$. In particular, they determined
necessary and sufficient conditions for them to be PPs, as well as
establishing results about their group structure under composition modulo
$x^q-x$.
Their classification result is as follows.
\begin{theorem}[\cite{wan91}]\label{th:WanLidl}
Let $s,d\in\nat$ with $d\mid q-1$.
Let $h\in\ffx{q}$, $T(x)=x^{(q-1)/d}$, and $H=\cyc{\gen^{(q-1)/d}}$.
Then $f(x)=x^s h(T(x))$ is a PP over $\ff{q}$ if and only if
\begin{enumerate}[label=(WL \arabic*)]
\item $\gcd (s,\frac{q-1}{d})=1$. 
\item $h(\lambda)\neq 0$ for all $\lambda\in H$.
\item $x^s T(h(x))$ is injective on $H$. 
\end{enumerate}
\end{theorem}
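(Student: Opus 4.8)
The plan is to realise $f$ as a ``lift'' of the simpler map $g(x)=x^s T(h(x))$ acting on the small group $H$, exploiting the fact that $T$ collapses $\ffs{q}$ onto $H$. Write $n=(q-1)/d$, so that $T(x)=x^n$ and $T$ is a surjective group homomorphism from $\ffs{q}$ onto $H=\cyc{\gen^{n}}$, the group of $d$-th roots of unity, with every fibre $T^{-1}(\lambda)$, $\lambda\in H$, having exactly $n$ elements. Since $s\ge 1$ gives $f(0)=0$, the polynomial $f$ permutes $\ff{q}$ if and only if it maps $\ffs{q}$ bijectively onto itself. Moreover a nonzero $x$ satisfies $f(x)=0$ precisely when $h(x^n)=0$, and $x^n$ runs through all of $H$ as $x$ runs through $\ffs{q}$; hence the requirement $f(\ffs{q})\subseteq\ffs{q}$ is exactly (WL~2). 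I therefore take (WL~2) as a standing assumption and reduce everything to deciding when $f$ is injective on $\ffs{q}$.

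The key algebraic identity is the commuting relation $T(f(x))=g(T(x))$, valid for all $x\in\ffs{q}$, which follows immediately from $T(f(x))=\bigl(x^s h(x^n)\bigr)^n=(x^n)^s\bigl(h(x^n)\bigr)^n=g(x^n)$. Thus $f$ maps each fibre $T^{-1}(\lambda)$ into the single fibre $T^{-1}(g(\lambda))$, where $g(\lambda)=\lambda^s T(h(\lambda))\in H$ is well defined by (WL~2). On a fibre $T^{-1}(\lambda)$ we have $x^n=\lambda$, so $f$ restricts there to the scaled power map $x\mapsto h(\lambda)\,x^s$; since multiplication by the nonzero scalar $h(\lambda)$ is a bijection, injectivity of this restriction is equivalent to injectivity of $x\mapsto x^s$ on the fibre. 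Writing the fibre as a coset of $\ker T=\cyc{\gen^{d}}$ and comparing exponents modulo $q-1$, one finds this restriction is injective if and only if $\gcd(s,n)=1$, a condition independent of $\lambda$. Hence $f$ is injective on every fibre if and only if (WL~1) holds.

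Assuming (WL~1) and (WL~2), each image $f(T^{-1}(\lambda))$ then has exactly $n$ elements and is contained in the $n$-element fibre $T^{-1}(g(\lambda))$, forcing the equality $f(T^{-1}(\lambda))=T^{-1}(g(\lambda))$. Because $\ffs{q}$ is the disjoint union of its $d$ fibres, $f$ is injective on $\ffs{q}$ if and only if these image sets are pairwise disjoint, i.e. if and only if the full fibres $T^{-1}(g(\lambda))$ are pairwise distinct as $\lambda$ ranges over $H$. Distinct fibres of $T$ being disjoint, this holds exactly when the values $g(\lambda)$ are pairwise distinct, that is, when $g(x)=x^s T(h(x))$ is injective on $H$ — which is (WL~3). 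Assembling the two directions gives the stated equivalence, and the degenerate cases $d=1$ and $d=q-1$ are easily checked to be consistent.

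I expect the main subtleties to lie in the second and third steps. First, one must verify that the per-fibre behaviour of $f$ collapses cleanly to the single numerical condition $\gcd(s,(q-1)/d)=1$; this is where the uniform exponent-spacing of a coset of $\ker T$ is used, and where one checks the condition is the same on every fibre. Second, one must argue that injectivity of $f$ on $\ffs{q}$ decomposes \emph{exactly} into ``injective on each fibre'' together with ``base map $g$ injective.'' This decomposition is valid only because all fibres of $T$ share the common size $n$, so that the injective image of one fibre must fill a target fibre completely; without the equal-fibre-size property the reduction to a condition on $H$ would break down. The remaining bookkeeping — that $f(0)=0$, and that $T$ maps $\ffs{q}$ onto $H$ with uniform fibres — is routine.
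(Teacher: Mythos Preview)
The paper does not supply its own proof of this theorem; it simply quotes the result from Wan and Lidl \cite{wan91} and uses conditions (WL~1)--(WL~3) as tools in later sections. So there is no in-paper argument to compare against.

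That said, your proof is correct and is essentially the standard argument. The commuting relation $T\circ f = g\circ T$ on $\ffs{q}$ is exactly the mechanism Wan and Lidl exploit: it reduces the permutation question to (i) the behaviour of $f$ on each fibre $T^{-1}(\lambda)$, which is the monomial map $x\mapsto h(\lambda)x^s$ and hence governed by $\gcd(s,n)$, and (ii) the behaviour of the induced map $g$ on the quotient $H$, which gives (WL~3). Your handling of the necessity of (WL~2) via $f(0)=0$ and the equal-fibre-size observation that turns ``injective on fibres $+$ injective base map'' into global injectivity are both clean and complete. The only cosmetic point is that the closing remark about ``degenerate cases $d=1$ and $d=q-1$'' is unnecessary, since nothing in your argument breaks in those cases; they are already covered.
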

We may assume that $\deg(h)<d$ when studying Wan-Lidl polynomials $f=x^s h(T(x))$. Indeed, since $s>1$, if $h$ has a term $x^d$, then the reduction modulo $x^q-x$ of this term in $f$ is $x^s(x^{(q-1)/d})^d\equiv x^{s+(q-1)} \equiv x^s\pmod{x^q-x}$. 

One important aspect about these polynomials is that their overall
behaviour is tied to their behaviour on the subgroup $H$.
It is for this reason that we were first attracted to studying the DU of
these polynomials.
Restrictions such as this one have been used before in the study of low DU
functions and their bijectiveness.
For example, Budgahyan, Carlet and Leander \cite{bud09} produced a
method for constructing APN functions from known APN functions using a
restriction condition.
In the aforementioned paper of Calderini \cite{cal21}, the author also uses
knowledge about a function's behaviour on a subfield to obtain a construction
of low DU permutations.
Very recently, Bergman and Coulter \cite{coulter22a} used a restriction condition
to prove a class of 4-DU functions were not bijections. 
The proofs for our results follow a similar approach to these previous low DU
results.

\section{Determining the DU for the general case}\label{sc:general}

We shall now prove Theorem \ref{th:DU_general_UB}.
To this end, fix $s,d\in\nat$ with $s>1$ and $d\mid (q-1)$.
Let $h\in\ffx{q}$, $T(x)=x^{(q-1)/d}$, and set $H=\cyc{\gen^{(q-1)/d}}$.
Note that $T$ is a multiplicative function. That is, for
$\alpha,\beta\in\ff{q}$, we have $T(\alpha\beta)=T(\alpha)T(\beta)$.
Additionally, $T$ maps $\ffs{q}$ into $H$, so that the only possible values of
$T(x)$ are in $H\cup\{0\}$. 
Finally, we Let $f(x)=x^s h(T(x))$, which is the Wan-Lidl polynomial whose DU
we wish to determine.

To determine $\delta_f$, we count the number of solutions of $\Delta_{f,a}(x)=c$ for arbitrary $a\in\ffs{q}$ and $c\in\ff{q}$ in the worst case scenario. For $a\in\ffs{q}$, we have 
\begin{equation*}
\Delta_{f,a}(x)=f(x+a)-f(x)=(x+a)^s h\big(T(x+a)\big)-x^s h\big(T(x)\big).
\end{equation*} 
There are four cases to be considered, based on the values of $T(x+a)$ and
$T(x)$.
\begin{itemize}
\item \textbf{Case 1: } If $x\notin\{0,-a\}$, and $(T(x+a),T(x))=(\lambda,\lambda)\in H\times H$, then 
\begin{equation*}
\begin{aligned}
\Delta_{f,a}(x)&=h(\lambda)((x+a)^s-x^s),
\end{aligned}
\end{equation*}
which has degree at most $s-1$. Hence, for arbitrary $c\in\ff{q}$, $\Delta_{f,a}(x)=c$ has at most $s-1$ solutions in this case. 
\item \textbf{Case 2: } If $x\notin\{0,-a\}$, and $(T(x+a),T(x))=(\lambda,\mu)\in H\times H$, where $\mu\neq \lambda$, then 
\begin{equation*}
\begin{aligned}
\Delta_{f,a}(x)&=(x+a)^s h(\lambda)-x^s h(\mu), 
\end{aligned}
\end{equation*}
which has degree at most $s$. Hence, for arbitrary $c\in\ff{q}$, $\Delta_{f,a}(x)=c$ has at most $s$ solutions in this case. 
\item \textbf{Case 3: } If $x=0$, then $\Delta_{f,a}(0)=f(a)-f(0)=f(a)$. 
\item \textbf{Case 4: } If $x=-a$, then $\Delta_{f,a}(-a)=f(0)-f(-a)=-f(-a)$. 
\end{itemize}
There are $d$ possibilities of $\lambda$ in Case 1, and $d(d-1)$ possibilities of the pair $\mu\neq\lambda$ in Case 2. So the contribution from $\ff{q}\setminus\{0,-a\}$ to the number of solutions of $\Delta_{f,a}(x)=c$ is at most 
\begin{equation}\label{eq:general_sol_case1and2}
d(s-1)+d(d-1)s=d(sd-1).
\end{equation}
Moreover, if $c=f(a)$ or $c=-f(-a)$, then Case 3 and Case 4 each gives one solution, respectively. Hence, for arbitrary $c\in\ff{q}$, the number of solutions of $\Delta_{f,a}(x)=c$ is at most $d(sd-1)+2$. This completes the proof of Theorem \ref{th:DU_general_UB}. 

\section{Determining the DU in restricted settings}\label{sc:special}

For the remainder, for distinct $\mu,\lambda\in H$, and fixed $a\in\ffs{q}$ and $c\in\ff{q}$, we call Case 1 in the proof of Theorem \ref{th:DU_general_UB} ``Case $(\lambda,\lambda)$'', and call Case 2 ``Case $(\lambda,\mu)$''. For fixed $a\in\ffs{q}$ and $c\in\ff{q}$, let 
\begin{align}\label{eq:case1_def_g}
g_{\lambda,\lambda}(x)=((x+a)^s-x^s)h(\lambda)-c
\end{align} be the polynomial $\Delta_{f,a}(x)-c$ obtained from Case $(\lambda,\lambda)$, and 
\begin{align}\label{eq:case2_def_g}
g_{\lambda,\mu}(x)=(x+a)^s h(\lambda)-x^s h(\mu)-c 
\end{align}
be the polynomial $\Delta_{f,a}(x)-c$ obtained from Case $(\lambda,\mu)$. In the following lemma, we prove some necessary conditions for
when $g_{\lambda,\lambda}(x)$ (\textit{resp.} $g_{\lambda,\mu}(x)$) has the highest possible degree and splits over $\ff{q}$, and when the roots all satisfy $(T(x+a),T(x))=(\lambda,\lambda)$ (\textit{resp.} $(T(x+a),T(x))=(\lambda,\mu)$). 

\begin{lemma}\label{lm:prod_roots}
Let $s,d\in\nat$ with $s>1$, and $d\mid q-1$. Let $h\in\ffx{q}$, $T(x)=x^{(q-1)/d}$, and $f(x)=x^s h(T(x))$. Let $H=\cyc{\gen^{(q-1)/d}}$.
For fixed $a\in\ffs{q}$, $c\in\ff{q}$, we have the following. 
\begin{enumerate}[label=(\alph*)]
\item (Case $(\lambda,\lambda)$) For $\lambda\in H$, the polynomial $g_{\lambda,\lambda}(x)=((x+a)^s-x^s)h(\lambda)-c$  has degree at most $s-1$. If $g_{\lambda,\lambda}(x)$ has $s-1$ roots in $\ff{q}$, $x_1,x_2,\dots,x_{s-1}$, then 
\begin{equation}\label{eq:prod_roots_case1}
\begin{aligned}
\prod_{i=1}^{s-1}x_i&= \frac{(-1)^{s-1}(a^{s}h(\lambda)-c)}{sa\cdot h(\lambda)}, 
\end{aligned}
\end{equation}
and
\begin{equation}\label{eq:prod_roots+a_case1}
\begin{aligned}
\prod_{i=1}^{s-1}(x_i+a)&= \frac{a^{s}h(\lambda)+(-1)^{s}c}{sa\cdot h(\lambda)}. 
\end{aligned}
\end{equation}
Moreover, if $(T(x_i+a),T(x_i))=(\lambda,\lambda)\in H\times H$ for $1\le i\le s-1$, then 
\begin{equation}\label{eq:T_roots_case1}
\begin{aligned}
\lambda^{s-1}=\frac{T(-1)^{s-1}T(a^s h(\lambda)-c)}{T(sa)T(h(\lambda))}, 
\end{aligned}
\end{equation}
and 
\begin{equation}\label{eq:T_roots+a_case1}
\begin{aligned}
\lambda^{s-1}=\frac{T(a^{s}h(\lambda)+(-1)^{s}c)}{T(sa)T(h(\lambda))}. 
\end{aligned}
\end{equation}

\item (Case $(\lambda,\mu)$) For distinct $\lambda, \mu\in H$, the polynomial $g_{\lambda,\mu}(x)=(x+a)^s h(\lambda)-x^s h(\mu)-c$  has degree at most $s$. If $g_{\lambda,\mu}(x)$ has $s$ roots in $\ff{q}$, $x_1,x_2,\dots,x_{s}$, then 
\begin{equation}\label{eq:prod_roots_case2}
\begin{aligned}
\prod_{i=1}^{s}x_i&= \frac{(-1)^{s}(a^{s}h(\lambda)-c)}{h(\lambda)-h(\mu)}, 
\end{aligned}
\end{equation}
and
\begin{equation}\label{eq:prod_roots+a_case2}
\begin{aligned}
\prod_{i=1}^{s}(x_i+a)=\frac{(-1)(a^{s}h(\mu)+(-1)^{s}c)}{h(\lambda)-h(\mu)}. 
\end{aligned}
\end{equation}
Moreover, if $(T(x_i+a),T(x_i))=(\lambda,\mu)\in H\times H$ for $1\le i\le s$, then 
\begin{equation}\label{eq:T_roots_case2}
\begin{aligned}
\mu^{s}=\frac{T(-1)^{s}T(a^s h(\lambda)-c)}{T(h(\lambda)-h(\mu))}, 
\end{aligned}
\end{equation}
and
\begin{equation}\label{eq:T_roots+a_case2}
\begin{aligned}
\lambda^{s}=\frac{T(-1)T(a^{s}h(\mu)+(-1)^{s}c)}{T(h(\lambda)-h(\mu))}. 
\end{aligned}
\end{equation}
\end{enumerate}
\end{lemma}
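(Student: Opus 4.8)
The plan is to handle parts (a) and (b) by the same three-step routine: use the binomial theorem to read off the leading coefficient and constant term of $g_{\lambda,\lambda}$ (resp. $g_{\lambda,\mu}$); apply Vieta's formulas, together with an evaluation at $x=-a$, to obtain $\prod x_i$ and $\prod(x_i+a)$; and then push the resulting identities through the multiplicative map $T$, using the hypothesis on the pairs $(T(x_i+a),T(x_i))$.

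For Case $(\lambda,\lambda)$, expanding $(x+a)^s-x^s$ shows the $x^s$ terms cancel, leaving a polynomial with leading term $sa\,x^{s-1}$ and constant term $a^s$; hence $g_{\lambda,\lambda}$ has degree at most $s-1$, leading coefficient $sa\cdot h(\lambda)$, and constant term $a^s h(\lambda)-c$. If it has $s-1$ roots in $\ff q$ (counted with multiplicity), then being of degree at most $s-1$ it must split as $g_{\lambda,\lambda}(x)=sa\cdot h(\lambda)\prod_{i=1}^{s-1}(x-x_i)$. Comparing constant terms gives \eqref{eq:prod_roots_case1}, and substituting $x=-a$ into both expressions for $g_{\lambda,\lambda}(x)$ and tidying the powers of $-1$ gives \eqref{eq:prod_roots+a_case1}. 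Case $(\lambda,\mu)$ is structurally identical: $g_{\lambda,\mu}$ has leading coefficient $h(\lambda)-h(\mu)$ and constant term $a^s h(\lambda)-c$, a full set of $s$ roots forces $g_{\lambda,\mu}(x)=(h(\lambda)-h(\mu))\prod_{i=1}^{s}(x-x_i)$, and comparing constant terms and evaluating at $x=-a$ yield \eqref{eq:prod_roots_case2} and \eqref{eq:prod_roots+a_case2}.

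For the remaining four identities I would apply $T$ to \eqref{eq:prod_roots_case1}--\eqref{eq:prod_roots+a_case2}. Since $T$ is multiplicative and, by hypothesis, $T(x_i)=\lambda$ and $T(x_i+a)=\lambda$ for all $i$ in Case $(\lambda,\lambda)$ (resp. $T(x_i)=\mu$ and $T(x_i+a)=\lambda$ in Case $(\lambda,\mu)$), the left-hand sides become $\prod T(x_i)=\lambda^{s-1}$ and $\prod T(x_i+a)=\lambda^{s-1}$ (resp. $\mu^s$ and $\lambda^s$), while the right-hand sides become the stated quotients of $T$-values; note $T((-1)^k)=T(-1)^k$. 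The only point requiring care is that applying $T$ to a quotient presupposes the numerators $a^s h(\lambda)-c$, $a^s h(\lambda)+(-1)^s c$, $a^s h(\mu)+(-1)^s c$ are nonzero; but a vanishing numerator would force the corresponding product of roots (or of the $x_i+a$) to vanish, i.e. some $x_i=0$ or $x_i=-a$, whence $T(x_i)=0$ or $T(x_i+a)=0$, contradicting the assumption that these values lie in $H$. I do not anticipate any serious obstacle: the argument is bookkeeping with binomial coefficients, Vieta's formulas, and the homomorphism property of $T$, the only subtleties being the correct tracking of signs and the exclusion of the degenerate zero cases just described.
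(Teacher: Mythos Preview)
Your proposal is correct and matches the paper's proof essentially step for step: the paper also identifies the leading coefficient, factors $g$ over its full set of roots, evaluates at $x=0$ (your ``compare constant terms'') and at $x=-a$ to obtain the two product formulas, and then applies the multiplicative map $T$. Your explicit remark that the numerators must be nonzero (else some $x_i\in\{0,-a\}$, forcing a $T$-value of $0\notin H$) is a detail the paper glosses over, but otherwise the arguments are the same.
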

\begin{proof}
\begin{enumerate}[label=(\alph*)]
\item[]
\item First, observe that the leading term of $g_{\lambda,\lambda}(x)$ is $sa\cdot h(\lambda)\cdot x^{s-1}$. So $g_{\lambda,\lambda}(x)$ has degree at most $s-1$. This leading term does not vanish if $p\nmid s$ and $h(\lambda)\neq 0$. Suppose this is the case so that $g_{\lambda,\lambda}(x)$ has degree exactly $s-1$. If $g_{\lambda,\lambda}(x)$ has $s-1$ roots $x_i\in\ff{q}$, $1\le i\le s-1$, then we can write 
\begin{align}\label{eq:case1_split_g}
g_{\lambda,\lambda}(x)=sa\cdot h(\lambda)\prod_{i=1}^{s-1}(x-x_i). 
\end{align}
By using both (\ref{eq:case1_def_g}) and (\ref{eq:case1_split_g}) to compute $g_{\lambda,\lambda}(0)$, we have 
\begin{equation}\label{eq:case1_plugin_g0}
\begin{aligned}
a^s h(\lambda)-c&=g_{\lambda,\lambda}(0)\\
&=sa\cdot h(\lambda)\prod_{i=1}^{s-1}(-x_i)=(-1)^{s-1}sa\cdot h(\lambda)\prod_{i=1}^{s-1}x_i. 
\end{aligned}
\end{equation}
Dividing both sides of (\ref{eq:case1_plugin_g0}) by $(-1)^{s-1}sa\cdot h(\lambda)$ gives (\ref{eq:prod_roots_case1}). 

Similarly, by using both (\ref{eq:case1_def_g}) and (\ref{eq:case1_split_g}) to compute $g_{\lambda,\lambda}(-a)$, we have 
\begin{equation*}
\begin{aligned}
-(-a)^s h(\lambda)-c&=g_{\lambda,\lambda}(-a)\\
&=sa\cdot h(\lambda)\prod_{i=1}^{s-1}(-a-x_i)=(-1)^{s-1}sa\cdot h(\lambda)\prod_{i=1}^{s-1}(x_i+a). 
\end{aligned}
\end{equation*}
Therefore, 
\begin{equation*}
\begin{aligned}
\prod_{i=1}^{s-1}(x_i+a)&=\frac{-(-a)^s h(\lambda)-c}{(-1)^{s-1}sa\cdot h(\lambda)}=\frac{(-1)^s((-1)^s a^s h(\lambda)+c)}{sa\cdot h(\lambda)}=\frac{ a^s h(\lambda)+(-1)^s c}{sa\cdot h(\lambda)}, 
\end{aligned}
\end{equation*}
which proves (\ref{eq:prod_roots+a_case1}). \\
Now assume that $(T(x_i+a),T(x_i))=(\lambda,\lambda)\in H\times H$ for $1\le i\le s-1$. Applying $T$ to both sides of (\ref{eq:prod_roots_case1}) and (\ref{eq:prod_roots+a_case1}), and using the fact that $T$ is a multiplicative function, we obtain (\ref{eq:T_roots_case1}) and (\ref{eq:T_roots+a_case1}). 

\item The leading term of $g_{\lambda,\mu}(x)$ is clearly $(h(\lambda)-h(\mu))x^s$, so $g_{\lambda,\mu}(x)$ has degree at most $s$. Suppose $h(\lambda)\neq h(\mu)$ so that $g_{\lambda,\mu}(x)$ has degree exactly $s$. If $g_{\lambda,\mu}(x)$ has $s$ roots $x_i\in\ff{q}$, $1\le i\le s$, then we can write 
\begin{align}\label{eq:case2_split_g}
g_{\lambda,\mu}(x)=(h(\lambda)-h(\mu))\prod_{i=1}^{s}(x-x_i). 
\end{align}
By using both (\ref{eq:case2_def_g}) and (\ref{eq:case2_split_g}) to compute $g_{\lambda,\mu}(0)$, we have 
\begin{equation}\label{eq:case2_plugin_g0}
\begin{aligned}
a^s h(\lambda)-c&=g_{\lambda,\mu}(0)\\
&=(h(\lambda)-h(\mu))\prod_{i=1}^{s}(-x_i)=(-1)^{s}(h(\lambda)-h(\mu))\prod_{i=1}^{s}x_i. 
\end{aligned}
\end{equation}
Dividing both sides of (\ref{eq:case2_plugin_g0}) by $(-1)^{s}(h(\lambda)-h(\mu))$ gives (\ref{eq:prod_roots_case2}).

Similarly, by using both (\ref{eq:case2_def_g}) and (\ref{eq:case2_split_g}) to compute $g_{\lambda,\mu}(-a)$, we have 
\begin{equation*}
\begin{aligned}
-(-a)^s h(\mu)-c&=g_{\lambda,\mu}(-a)\\
&=(h(\lambda)-h(\mu))\prod_{i=1}^{s}(-a-x_i)=(-1)^{s}(h(\lambda)-h(\mu))\prod_{i=1}^{s}(x_i+a). 
\end{aligned}
\end{equation*}
Therefore, 
\begin{equation*}
\begin{aligned}
\prod_{i=1}^{s}(x_i+a)&=\frac{-(-a)^s h(\mu)-c}{(-1)^{s}(h(\lambda)-h(\mu))}=\frac{(-1)^{s+1}((-1)^s a^s h(\mu)+c)}{h(\lambda)-h(\mu)}=\frac{ (-1)(a^s h(\mu)+(-1)^s c)}{h(\lambda)-h(\mu)}, 
\end{aligned}
\end{equation*}
which proves (\ref{eq:prod_roots+a_case2}). 

Finally, suppose that $(T(x_i+a),T(x_i))=(\lambda,\mu)\in H\times H$ for $1\le i\le s-1$. Applying $T$ to both sides of (\ref{eq:prod_roots_case2}) and (\ref{eq:prod_roots+a_case2}), and using the fact that $T$ is a multiplicative function, we obtain (\ref{eq:T_roots_case2}) and (\ref{eq:T_roots+a_case2}). 

\end{enumerate}
\end{proof}

\begin{remark}
Since we are counting the number of solutions of $\Delta_{f,a}(x)=c$ in the worst case scenario, we need not consider the situation where $\deg(g_{\lambda,\lambda})<s-1$ or $\deg(g_{\lambda,\mu})<s$, i.e., when the leading term vanishes in Case $(\lambda,\lambda)$ or Case $(\lambda,\mu)$, respectively. Besides, in most of the cases that we are interested in, we may assume that $p\nmid s$ and $h(\lambda)\neq 0$ so that $\deg(g_{\lambda,\lambda})=s-1$, so that we are mostly interested in this case anyway.
\end{remark}
This technical lemma will form the basis of all of our remaining results.
The proofs of Theorem \ref{th:DU_sEven_d2} and \ref{th:DU_s2_dEven} are based on the following framework. First, (\ref{eq:general_sol_case1and2}) shows that for any $c\in\ff{q}$ and $a\in\ffs{q}$, $\Delta_{f,a}(x)=c$ has at most $d(sd-1)$ solutions in $\ff{q}\setminus\{0,-a\}$. We can reduce this number by the arguments in the next few paragraphs. Second, since $\Delta_{f,a}(0)=f(a)$, we check if the upper bound of the number of solutions in $\ff{q}\setminus\{0,-a\}$ found in the first step can be reduced further for $c=f(a)$. Otherwise, the bound of DU goes up by $1$ to include $x=0$ when $c=f(a)$. Finally, since $\Delta_{f,a}(-a)=-f(-a)$, we need to repeat the previous process for $c=-f(-a)$. 

If $f$ is a PP, then by Theorem \ref{th:WanLidl} (WL 3), $\lambda^{s}T( h(\lambda))\neq \mu^{s}T( h(\mu))$ whenever $\mu, \lambda\in H$, $\mu\neq \lambda$. In fact, when all the assumptions of both Case $(\lambda,\lambda)$ and Case $(\lambda,\mu)$ in Lemma \ref{lm:prod_roots} hold, i.e., they contribute the maximum number of solutions to $\Delta_{f,a}(x)=c$, we can obtain an expression of $\lambda^{s}T( h(\lambda))$ by combining (\ref{eq:T_roots_case1}) and (\ref{eq:T_roots_case2}) as follows. First, from (\ref{eq:T_roots_case2}) we can solve 
\begin{equation*}
\begin{aligned}
T(a^s h(\lambda)-c)=\frac{\mu^{s}T(h(\lambda)-h(\mu))}{T(-1)^{s}}. 
\end{aligned}
\end{equation*}
Substituting $T(a^s h(\lambda)-c)$ into (\ref{eq:T_roots_case1}) gives
\begin{equation*}
\begin{aligned}
\lambda^{s-1}&=\frac{T(-1)^{s-1}\mu^{s}T(h(\lambda)-h(\mu))}{T(-1)^{s}T(sa)T( h(\lambda))}=\frac{\mu^{s}T(h(\lambda)-h(\mu))}{T(-1)T(sa)T( h(\lambda))}. 
\end{aligned}
\end{equation*}
Multiplying both sides by $\lambda T( h(\lambda))$ yields
\begin{equation*}
\begin{aligned}
\lambda^{s}T( h(\lambda))&=C_a\lambda\mu^{s}T(h(\lambda)-h(\mu)), 
\end{aligned}
\end{equation*}
where $C_a=1/(T(-1)T(sa))$ is a constant that does not depend on $\mu$ and $\lambda$. Similarly, if the assumptions of both Case $(\mu,\mu)$ and Case $(\mu,\lambda)$ are satisfied, we have
\begin{equation*}
\begin{aligned}
\mu^{s}T( h(\mu))&=C_a\mu\lambda^{s}T(h(\mu)-h(\lambda))\\
&= T(-1)C_a\mu\lambda^{s}T(h(\lambda)-h(\mu)). 
\end{aligned}
\end{equation*}
Note that these expressions are independent of $c$. \\

If $T(-1)=(\mu/\lambda)^{s-1}$, then 
\begin{equation}\label{eq:mu_lambda_equal}
\mu^{s}T( h(\mu))=(\mu/\lambda)^{s-1}C_a\mu\lambda^{s}T(h(\lambda)-h(\mu))=C_a\lambda\mu^{s}T(h(\lambda)-h(\mu))=\lambda^{s}T( h(\lambda)), 
\end{equation}
which contradicts Theorem \ref{th:WanLidl} (WL 3). Therefore, if $T(-1)=(\mu/\lambda)^{s-1}$, the equation $\Delta_{f,a}(x)=c$ cannot have the maximum number of solutions simultaneously in Case $(\lambda,\lambda)$, Case $(\mu,\mu)$, Case $(\lambda,\mu)$, and Case $(\mu,\lambda)$. Hence, the number of solutions of $\Delta_{f,a}(x)=c$ that are contributed by these four cases must be strictly less than $2s+2(s-1)=4s-2$. 

For the remaining, we use this method to prove improved bounds of the DU for
some special cases when $T(-1)=(\mu/\lambda)^{s-1}$. Since $T(-1)\in\{\pm 1\}$, one possible future direction is to completely investigate all cases whose bound of DU can be improved by the aforementioned method. 

\subsection{Proof of Theorem \ref{th:DU_sEven_d2}}
In this subsection, we consider the case where $d=2$ and $s$ is even.
Take $h\in\ffx{q}$, $T(x)=x^{(q-1)/2}$, and $H=\cyc{\gen^{(q-1)/2}}=\{\pm 1\}$. Note that now $T$ is the quadratic character $\eta$ of $\ff{q}$, so we use $\eta$ instead of $T$ for the remaining of this subsection. Let $f(x)=x^s h(\eta(x))$. First, a few things can be simplified as follows. 
\begin{enumerate}[label=(\roman*)]
\item Since $d=2$ and we are not interested in monomial PPs, we may assume that $h(x)=x+b$ for some $b\in\ffs{q}$. Moreover, the three necessary and sufficient conditions of Theorem \ref{th:WanLidl} give the following restrictions on $q$ and $b$ for $f(x)=x^s(\eta(x)+b)$ being a PP. 
\begin{enumerate}[label=(WL \arabic*)]
\item $\gcd (s,\frac{q-1}{2})=1$: Since $s$ is even, $(q-1)/2$ must be odd. So $q\equiv 3\bmod 4$ and $\eta(-1)=-1$. 
\item $h(\lambda)\neq 0$ for all $\lambda\in H$: This gives $(b\pm 1)\neq 0$ so $b\neq \pm 1$. 
\item $x^s T(h(x))$ is injective on $H$: This gives $\eta(b+1)\neq \eta(b-1)$. So $\eta(b+1)=-\eta(b-1)$. 
\end{enumerate}
\item For a given $b\in\ffs{q}$, $f(x)=x^s(\eta(x)+b)$ and $f'(x)=x^s(\eta(x)-b)$ are linearly equivalent, since 
\begin{equation*}
\begin{aligned}
-f(-x)=-(-x)^s(\eta(-x)+b)=-x^s(-\eta(x)+b)=x^s(\eta(x)-b)=f'(x). 
\end{aligned}
\end{equation*}

Moreover, we claim that we only need to check $\Delta_{f,1}(x)$ for determining the DU. First, by the fact that $f'(x)=-f(-x)$ and setting $y=-x-1$, we have $\Delta_{f',1}(x)=-f(-(x+1))-(-f(-x))=-f(y)+f(y+1)=\Delta_{f,1}(y)$. Hence, for all $c\in \ff{q}$, the number of solutions of $\Delta_{f,1}(x)=c$ is the same as $\Delta_{f',1}(x)=c$. Next, fix $a\in\ffs{q}$ and $c\in\ff{q}$. Then 
\begin{equation*}
\begin{aligned}
c=\Delta_{f,a}(x)&=(x+a)^s(\eta(x+a)+b)-x^s(\eta(x)+b)\\
&=a^s\left(\frac{x}{a}+1\right)^s\left(\eta\left(a\left(\frac{x}{a}+1\right)\right)+b\right)-a^s\frac{x^s}{a^s}\left(\eta\left(a\cdot\frac{x}{a}\right)+b\right).
\end{aligned}
\end{equation*}
Substituting $Y=\frac{x}{a}$ and dividing both sides by $a^s\eta(a)$ yields
\begin{equation*}
\begin{aligned}
\left(Y+1\right)^s\left(\eta\left(Y+1\right)+\frac{b}{\eta(a)}\right)-Y^s\left(\eta\left(Y\right)+\frac{b}{\eta(a)}\right)=\frac{c}{a^s\eta(a)}. 
\end{aligned}
\end{equation*}
The left hand side equals to either $\Delta_{f,1}(Y)$ or $\Delta_{f',1}(Y)$, depending on the value of $\eta(a)$. And for fixed $a\in\ffs{q}$, as $c$ runs over all $\ff{q}$, so does $c'=\frac{c}{a^s\eta(a)}$. Therefore, 
\[\delta_{f}=\max_{a\in\ffs{q},c\in\ff{q}}|\{x\in\ff{q}\mid \Delta_{f,a}(x)=c\}|=\max_{a\in\ffs{q},c'\in\ff{q}}|\{x\in\ff{q}\mid \Delta_{f,1}(x)=c'\}|.\]
\end{enumerate}

To prove Theorem \ref{th:DU_sEven_d2}, we first show that $\Delta_{f,1}(x)=c$ has at most $4s-3$ solutions in $\ff{q}\setminus\{0,-1\}$ for any $c\in\ff{q}$. From (\ref{eq:general_sol_case1and2}) we know that $\Delta_{f,1}(x)=c$ has at most $4s-2$ solutions in $\ff{q}\setminus\{0,-1\}$ for any $c\in\ff{q}$, where the contribution came from Case $(1,1)$, Case $(-1,-1)$, Case $(1,-1)$, and Case $(-1,1)$ since $H=\{\pm 1\}$. By (\ref{eq:mu_lambda_equal}), since $-1=\eta(-1)=(\mu/\lambda)^{s-1}=-1$ as $\{\mu,\lambda\}=\{\pm 1\}$, these four cases cannot have the maximum number of solutions simultaneously. So for all $c\in\ff{q}$, $\Delta_{f,1}(x)=c$ has at most $4s-3$ solutions in $\ff{q}\setminus\{0,-1\}$. 

Next, let $c=f(1)=b+1$ so that $0$ is a solution of $\Delta_{f,1}(x)=c$. First note that if $-1$ was also a solution, then $c=-f(-1)=-b+1$, which means $b=0$. So $-1$ cannot be another solution as long as $b\in\ffs{q}$. 

We need to check the number of solutions in $\ff{q}\setminus\{0,-1\}$. By Lemma \ref{lm:prod_roots} (a), if $g_{\lambda,\lambda}(x)$ has $s-1$ roots, $x_1,\dots,x_{s-1}\in\ff{q}$, then

\begin{equation*}
\begin{aligned}
\prod_{i=1}^{s-1}x_i&= \frac{(-1)^{s-1}(h(\lambda)-c)}{s\cdot h(\lambda)}=\frac{-(b+\lambda-b-1)}{s\cdot (b+\lambda)}=\frac{1-\lambda}{s\cdot (b+\lambda)}.
\end{aligned}
\end{equation*}
If $x_i\neq 0$ for all $1\le i\le s-1$, then $\prod_{i=1}^{s-1}x_i\neq 0$ and $\lambda\neq 1$. This means that only Case $(-1,-1)$ may contribute $s-1$ solutions in $\ff{q}\setminus\{0,-1\}$, but Case $(1,1)$ has at most $s-2$ solutions in $\ff{q}\setminus\{0,-1\}$. 

Similarly, by Lemma \ref{lm:prod_roots} (b), if $g_{\lambda,\mu}(x)$ has $s$ roots, $x_1,\dots,x_{s}\in\ff{q}$, then
\begin{equation*}
\begin{aligned}
\prod_{i=1}^{s}x_i&= \frac{(-1)^{s}(h(\lambda)-c)}{h(\lambda)-h(\mu)}=\frac{b+\lambda-b-1}{b+\lambda-b-\mu}=\frac{\lambda-1}{2\lambda}, \text{ since }\mu=-\lambda. 
\end{aligned}
\end{equation*}
If $x_i\neq 0$ for all $1\le i\le s$, then $\prod_{i=1}^{s}x_i\neq 0$ and $\lambda\neq 1$. This means that only Case $(-1,1)$ may have $s$ solutions in $\ff{q}\setminus\{0,-1\}$, but Case $(1,-1)$ has at most $s-1$ solutions in $\ff{q}\setminus\{0,-1\}$. Hence, when $c=f(1)$, $\Delta_{f,1}(x)=c$ has at most $1+(s-1)+(s-2)+s+(s-1)=4s-3$ solutions. 

Lastly, let $c=-f(-1)=1-b$ so that $-1$ is a solution of $\Delta_{f,1}(x)=c$. We have shown that $0$ cannot be another solution if $b\neq 0$. Again, by Lemma \ref{lm:prod_roots} (a), if $g_{\lambda,\lambda}(x)$ has $s-1$ roots, $x_1,\dots,x_{s-1}$ in $\ff{q}$, then 
\begin{equation*}
\begin{aligned}
\prod_{i=1}^{s-1}(x_i+1)&= \frac{h(\lambda)+(-1)^{s}c}{s\cdot h(\lambda)}=\frac{b+\lambda+1-b}{s(b+\lambda)}=\frac{\lambda+1}{s(b+\lambda)}. 
\end{aligned}
\end{equation*}
If $x_i\neq -1$ for all $1\le i\le s-1$, then $\prod_{i=1}^{s-1}(x_i+1)\neq 0$ and $\lambda\neq -1$. Therefore, only Case $(1,1)$ may have $s-1$ solutions in $\ff{q}\setminus\{0,-1\}$, but Case $(-1,-1)$ has at most $s-2$ solutions in $\ff{q}\setminus\{0,-1\}$. 

Similarly, by Lemma \ref{lm:prod_roots} (b), if $g_{\lambda,\mu}(x)$ has $s$ roots, $x_1,\dots,x_{s}$ in $\ff{q}$, then
\begin{equation*}
\begin{aligned}
\prod_{i=1}^{s}(x_i+1)=\frac{(-1)(h(\mu)+(-1)^{s}c)}{h(\lambda)-h(\mu)}=\frac{(-1)(b+\mu+1-b)}{2\lambda}=\frac{-(-\lambda+1)}{2\lambda}, \text{ since }\mu=-\lambda. 
\end{aligned}
\end{equation*}
If $x_i\neq -1$ for all $1\le i\le s$, then $\prod_{i=1}^{s}(x_i+1)\neq 0$ and $\lambda\neq 1$. Therefore, only Case $(-1,1)$ may have $s$ solutions in $\ff{q}\setminus\{0,-1\}$, but Case $(1,-1)$ has at most $s-1$ solutions in $\ff{q}\setminus\{0,-1\}$. Hence, when $c=-f(-1)$, $\Delta_{f,1}(x)=c$ has at most $4s-3$ solutions. 

We conclude that when $d=2$ and $s$ is even, $\Delta_{f,1}(x)=c$ has at most $4s-3$ solutions for any $c\in \ff{q}$. This completes the proof of Theorem \ref{th:DU_sEven_d2}. 

\subsection{Proof of Corollary \ref{co:DU_s2d2b3}} Corollary \ref{co:DU_s2d2b3} is a special case of Theorem \ref{th:DU_sEven_d2} when $s=2$, $b=\pm 3$, and $q\equiv 3\pmod 8$. Because $x^2(\eta(x)+3)$ and $x^2(\eta(x)-3)$ are linearly equivalent, we only need to prove the result for $b=3$. Since $q\equiv 3\pmod 8$, $\eta(2)=-1$ and $\gcd(2,(q-1)/2)=1$. Also, $\eta(3+1)=1$ and $\eta(3-1)=-1$. So $f(x)$ is always a PP by Theorem \ref{th:WanLidl} under these settings. 

Since $H=\{\pm 1\}$, there are only four cases when $x\in \ff{q}\setminus\{0,-1\}$: Case $(1,1)$, Case $(-1,-1)$, Case $(1,-1)$, and Case $(-1,1)$. First, since $s=2$, $g_{\lambda,-\lambda}(x)$ is quadratic:
\begin{equation*}
\begin{aligned}
\Delta_{f,1}(x)-c&=(x+1)^2(b+\lambda)-x^2(b-\lambda)-c\\
&=2\Big(\lambda x^2+(b+\lambda)x+(b+\lambda-c)/2\Big). 
\end{aligned}
\end{equation*}
If $\Delta_{f,1}(x)-c$ has two roots, the discriminant $(b+\lambda)^2-2\lambda(b+\lambda-c)$ should be either a square in $\ff{q}$ if the two roots are distinct, or should be $0$ if the roots coincide. For Case $(1,-1)$, the discriminant is 
\begin{equation*}
\begin{aligned}
D=(b+1)^2-2(b+1-c)=(b+1)^2+2(c-b-1),
\end{aligned}
\end{equation*}
and for Case $(-1,1)$, the discriminant is 
\begin{equation*}
\begin{aligned}
D'=(b-1)^2+2(b-1-c)=(b-1)^2-2(c-b+1). 
\end{aligned}
\end{equation*}

Now set $b=3$ in $D$ and $D'$, and together with simplified (\ref{eq:T_roots_case1}), (\ref{eq:T_roots+a_case1}), (\ref{eq:T_roots_case2}), and (\ref{eq:T_roots+a_case2}) about $\eta$ values for $s=2$ and $a=1$ from Lemma \ref{lm:prod_roots}, we obtain the following. 

\begin{itemize}
\item {\bf Case $(1,1)$:}  If $\Delta_{f,1}(x)=c$ has $1$ solution in Case $(1,1)$, then 
\begin{equation}\label{eq:b3case1}
\begin{aligned}
\eta(c-4)=-1,\qquad\eta(c+4)=-1. 
\end{aligned}
\end{equation}

\item {\bf Case $(-1,-1)$:}  If $\Delta_{f,1}(x)=c$ has $1$ solution in Case $(-1,-1)$, then 
\begin{equation}\label{eq:b3case2}
\begin{aligned}
\eta(c-2)=-1,\qquad\eta(c+2)=-1. 
\end{aligned}
\end{equation}

\item {\bf Case $(1,-1)$:}  If $\Delta_{f,1}(x)=c$ has $2$ solutions in Case $(1,-1)$, $x_1,x_2$, then 
\begin{equation}\label{eq:b3case3_hasSol}
\begin{aligned}
\eta(c-4)=1,\qquad\eta(c+2)=1.
\end{aligned}
\end{equation}
Moreover, since 
\begin{equation*}
\begin{aligned}
D=4^2+2(c-4)=2(c+4),
\end{aligned}
\end{equation*}
if $x_1\neq x_2$, then 
\begin{equation*}
\begin{aligned}
\eta(c+4)=\eta(2)=-1; 
\end{aligned}
\end{equation*}
otherwise, if $x_1=x_2$, then 
\begin{equation*}
\begin{aligned}
c=-4. 
\end{aligned}
\end{equation*}

\item {\bf Case $(-1,1)$:}  If $\Delta_{f,1}(x)=c$ has $2$ solutions in Case $(-1,1)$, $x_1,x_2$, then 
\begin{equation*}
\begin{aligned}
\eta(c-2)=-1,\qquad\eta(c+4)=-1. 
\end{aligned}
\end{equation*}
Moreover, since 
\begin{equation*}
\begin{aligned}
D'=2^2-2(c-2)=-2(c-4), 
\end{aligned}
\end{equation*}
if $x_1\neq x_2$, then 
\begin{equation}\label{eq:b3case4_2diffSol}
\begin{aligned}
\eta(c-4)=\eta(-2)=1; 
\end{aligned}
\end{equation}
otherwise, if $x_1=x_2$, then 
\begin{equation}\label{eq:b3case4_2sameSol}
\begin{aligned}
c=4. 
\end{aligned}
\end{equation}
\end{itemize}

From these conditions, we see that (\ref{eq:b3case1}) contradicts with (\ref{eq:b3case4_2diffSol}) and (\ref{eq:b3case4_2sameSol}). So if there is any solution from Case $(1,1)$ or Case $(-1,1)$, either Case $(1,1)$ has $1$ solution and Case $(-1,1)$ has no solution, or Case $(1,1)$ has no solution and Case $(-1,1)$ has $1$ or $2$ solutions. Moreover, condition (\ref{eq:b3case2}) contradicts with (\ref{eq:b3case3_hasSol}). So if there is any solutions from Case $(-1,-1)$ or Case $(1,-1)$, either Case $(-1,-1)$ has $1$ solution and Case $(1,-1)$ has at most $1$ solution, or Case $(-1,-1)$ has no solution and Case $(1,-1)$ has $1$ or $2$ solutions. 

To sum up, we have shown that there are no more than $2$ solutions in total from Case $(1,1)$ and Case $(-1,1)$, and no more than $2$ solutions in total from Case $(-1,-1)$ and Case $(1,-1)$. Hence, $\Delta_{f,1}(x)=c$ has at most $4$ solutions in $\ff{q}\setminus\{0,-1\}$ for any $c\in\ff{q}$. Finally, if $x=0$ is a solution of $\Delta_{f,1}(x)=c$, then $c=f(1)=4$. This cannot satisfy (\ref{eq:b3case1}), (\ref{eq:b3case3_hasSol}), and (\ref{eq:b3case4_2diffSol}). This means that there is no solution from Case $(1,1)$, and there is at most $1$ solution from each of Case $(-1,-1)$, Case $(1,-1)$, and Case $(-1,1)$. On the other hand, if $x=-1$ is a solution, then $c=-f(-1)=-2$. This cannot satisfy (\ref{eq:b3case2}) and (\ref{eq:b3case3_hasSol}). So there is no solution from Case $(-1,-1)$, and at most $1$ solution from Case $(1,-1)$. Moreover, since (\ref{eq:b3case1}) contradicts to (\ref{eq:b3case4_2diffSol}), it is impossible to have $3$ different solutions together from Case $(1,1)$ and Case $(-1,1)$. Thus, there are at most $4$ solutions when $c=f(1)$ or $c=-f(-1)$ as well. 

Therefore, we conclude that $\Delta_{f,1}(x)=c$ has at most $4$ solutions in $\ff{q}$ for any $c\in\ff{q}$, which means the DU of $f(x)=x^2(\eta(x)\pm 3)$ is at most $4$. 

\subsection{Proof of Theorem \ref{th:DU_s2_dEven}} In this subsection, we set $s=2$ and let $d\in\nat$ be even. Since we require that $d\mid q-1$, $q$ must be odd. Let $h\in\ffx{q}$, $T(x)=x^{(q-1)/d}$, and $H=\cyc{\gen^{(q-1)/d}}$. Let $f(x)=x^2 h(T(x))$ be a PP. By Theorem \ref{th:WanLidl} (WL 1), if $f$ is a PP, then $\gcd (2,\frac{q-1}{d})=1$. Hence, $(q-1)/d$ is odd, and $T(-1)=-1$.

To prove Theorem \ref{th:DU_s2_dEven}, first by (\ref{eq:general_sol_case1and2}) we know that $\Delta_{f,a}(x)=c$ has at most $2d^2-d$ solutions in $\ff{q}\setminus\{0,-a\}$ for any $a,c\in\ff{q}$, $a\neq 0$. We shall reduce this count by finding suitable $\lambda,\mu\in H$ that give a contradiction as described in (\ref{eq:mu_lambda_equal}). Since $d$ is even, $-1\in H$. So for $\lambda\in H$, there exists $\mu\in H$ such that $\mu=-\lambda$. This gives the desired condition $-1=T(-1)=(\mu/\lambda)^{s-1}=-1$ that leads to the contradiction in (\ref{eq:mu_lambda_equal}). Therefore, for every pair of $\pm \lambda\in H$, Case $(\lambda,\lambda)$, Case $(-\lambda,-\lambda)$, Case $(\lambda,-\lambda)$, and Case $(-\lambda,\lambda)$ cannot all have the maximum number of solutions simultaneously. Since there are $d/2$ such pairs, we conclude that for any $c\in\ff{q}$, $\Delta_{f,a}(x)=c$ has at most $2d^2-d-\frac{d}{2}=2d^2-\frac{3d}{2}$ solutions in $\ff{q}\setminus\{0,-a\}$. 

Next, let $c=f(a)=a^2 h(T(a))$ so that $x=0$ is a solution of $\Delta_{f,a}(x)=c$. If $x=-a$ is another solution, then $a^2 h(T(a))=c=-f(-a)=-a^2 h(T(-a))$. So if $h(T(-a))=-h(T(a))$, then we need to add one more solution to the worst case. For solutions in $\ff{q}\setminus\{0,-a\}$, since $q$ is odd and $s=2\neq 0$ in $\ff{q}$, $g_{\lambda,\lambda}(x)$ is linear. By Lemma \ref{lm:prod_roots} (a), the root of $g_{\lambda,\lambda}(x)$ is 
\begin{equation*}
\begin{aligned}
x&= \frac{(-1)^{s-1}(a^{s}h(\lambda)-c)}{sa\cdot h(\lambda)}=\frac{-a^2 (h(\lambda)-h(T(a)))}{2a\cdot h(\lambda)}. 
\end{aligned}
\end{equation*}
If $x\neq 0$, then $h(\lambda)-h(T(a))\neq 0$, which implies $\lambda\neq T(a)$. So for fixed $a\in\ffs{q}$, Case $(\lambda,\lambda)$ may give one solution in $\ff{q}\setminus\{0,-a\}$ only if $\lambda\neq T(a)$. This means we have at most $d-1$ solutions in $\ff{q}\setminus\{0,-a\}$ from this type. 

Similarly, by Lemma \ref{lm:prod_roots} (b), if $g_{\lambda,\mu}(x)$ has two roots, $x_1,x_{2}\in \ff{q}$, then
\begin{equation*}
\begin{aligned}
x_1 x_2&= \frac{(-1)^{s}(a^{s}h(\lambda)-c)}{h(\lambda)-h(\mu)}=\frac{a^2 (h(\lambda)-h(T(a)))}{h(\lambda)-h(\mu)}.  
\end{aligned}
\end{equation*}
If $x_1, x_2\neq 0$, then $h(\lambda)-h(T(a))\neq 0$, which implies $\lambda\neq T(a)$. So for fixed $a\in\ffs{q}$, Case $(\lambda,\mu)$ may give two solutions in $\ff{q}\setminus\{0,-a\}$ if $\lambda\neq T(a)$, but at most one solution in $\ff{q}\setminus\{0,-a\}$ if $\lambda=T(a)$. There are $(d-1)^2$ choices of $(\lambda,\mu)$ in the former case, and $d-1$ choices of $(T(a),\mu)$ in the later case. Hence, we have at most $2(d-1)^2+(d-1)=2d^2-3d+1$ solutions in $\ff{q}\setminus\{0,-a\}$ from this type. Therefore, we conclude that when $c=f(a)$, $\Delta_{f,a}(x)=c$ has at most $1+1+(d-1)+(2d^2-3d+1)=2d^2-2d+2$ solutions. 

Finally, let $c=-f(-a)=-a^2 h(T(-a))$ so that $x=-a$ is a solution of $\Delta_{f,a}(x)=c$. For the same reason discussed above, $0$ may be another solution if $h(T(-a))=-h(T(a))$. Next, we check the solutions in $\ff{q}\setminus\{0,-a\}$. Once again by Lemma \ref{lm:prod_roots} (a), the root of $g_{\lambda,\lambda}(x)$ satisfies 
\begin{equation*}
\begin{aligned}
x+a&= \frac{a^{s}h(\lambda)+(-1)^{s}c}{sa\cdot h(\lambda)}=\frac{a^2 (h(\lambda)-h(T(-a)))}{2a\cdot h(\lambda)}. 
\end{aligned}
\end{equation*}
If $x\neq -a$, then $h(\lambda)-h(T(-a))\neq 0$, which implies $\lambda\neq T(-a)$. So for fixed $a\in\ffs{q}$, Case $(\lambda,\lambda)$ may give one solution in $\ff{q}\setminus\{0,-a\}$ only if $\lambda\neq T(-a)$. Similarly, by Lemma \ref{lm:prod_roots} (b), if $g_{\lambda,\mu}(x)$ has two roots, $x_1,x_{2}\in \ff{q}$, then
\begin{equation*}
\begin{aligned}
(x_1+a)(x_2+a)&= \frac{(-1)(a^{s}h(\mu)+(-1)^{s}c)}{h(\lambda)-h(\mu)}=\frac{-a^2 (h(\mu)-h(T(-a)))}{h(\lambda)-h(\mu)}.  
\end{aligned}
\end{equation*}
If $x_1, x_2\neq -a$, then $h(\mu)-h(T(-a))\neq 0$, which implies $\mu\neq T(-a)$. So for fixed $a\in\ffs{q}$, Case $(\lambda,\mu)$ may give two solutions in $\ff{q}\setminus\{0,-a\}$ if $\mu\neq T(-a)$, but at most one solution in $\ff{q}\setminus\{0,-a\}$ if $\mu=T(-a)$. Hence, with a similar count to that made for $c=f(a)$, we conclude that when $c=-f(-a)$, $\Delta_{f,a}(x)=c$ has at most $2d^2-2d+2$ solutions as well. 

We have proved that $\Delta_{f,a}(x)=c$ has at most $2d^2-\frac{3d}{2}$ solutions when $c\notin\{f(a),-f(-a)\}$, and at most $2d^2-2d+2$ solutions when $c\in\{f(a),-f(-a)\}$. It is easy to check that $2d^2-\frac{3d}{2}\ge 2d^2-2d+2$ if and only if $d\ge 4$. If $d=2$, then $2d^2-\frac{3d}{2}=5$, and we are in the situation of Theorem \ref{th:DU_sEven_d2} so that it suffices to consider $a=1$. We have shown in Theorem \ref{th:DU_sEven_d2} that as long as $f$ is not a monomial, $\delta_f\le 5$. If $f$ is a monomial, then $f=x^2\eta(x)$. If $0,-1$ are both solutions of $\Delta_{f,1}(x)=c$, then $c=f(1)=1$. Then $g_{1,1}(x)=(x+1)^2-x^2-1=2x$, so the root of $g_{1,1}(x)$ is $0$. And $g_{1,-1}(x)=(x+1)^2+x^2-1=2x^2+2x=2x(x+1)$, so the two roots of $g_{1,-1}(x)$ are $0$ and $-1$. This means that $\Delta_{f,1}(x)=c$ has at most $3$ solutions in $\ff{q}\setminus\{0,-1\}$, and therefore we still have $\delta_f\le 5$ in this case. This concludes the proof of Theorem \ref{th:DU_s2_dEven}.

\section{Computation Data for Theorem \ref{th:DU_sEven_d2}}\label{sc:data}

Using the Magma algebra system \cite{magma}, we computed the DU of all PPs of
the form $f(x)=x^s(\eta(x)+b)$ described in in Theorem \ref{th:DU_sEven_d2} over some prime fields $\ff{p}$ for $s=2,4,6$. We provide a selection of the
computational results in Table \ref{tb:s2}, Table \ref{tb:s4}, and Table \ref{tb:s6}, respectively. The rows are indexed by $p$, the order of the field, and the columns are indexed by $\delta_f$. The number in row $p$ and column $\delta_f$ represents the number of such $f\in\ffx{p}$ with that exact $\delta_f$. Since $b\neq \pm 1$, and $x^s(\eta(x)+b)$ and $x^s(\eta(x)-b)$ are linearly equivalent, we only test $2\le b\le (p-1)/2$. So the numbers in our tables are actually half of the total counts, if one were to consider all possible $b\in\ffs{p}$. Moreover, recall that Theorem \ref{th:WanLidl} (WL 1) states that a necessary condition for $f$ to be a PP is that $\gcd(s,(p-1)/2)=1$, so we only test $\ff{p}$ that satisfy this condition for a given $s$. For $s=2$ and $4$, this is simply requiring that $p\equiv 3\pmod 4$. For $s=6$, we also need $p\equiv 5\pmod 6$. 

When $s=2$, the bound in Theorem \ref{th:DU_sEven_d2} gives $\delta_f\le 5$.
We computed the DU of all PPs of the form $f(x)=x^2(\eta(x)+b)\in\ffx{p}$ for
all prime fields of order $p<7000$.
We found that there exists $f\in\ffx{p}$ with $\delta_f=5$ for $p=31$ and
$59\le p < 7000$, i.e., almost all tested fields have examples that meet the
upper bound.
Moreover, when $p$ is large (roughly $>1400$), about $95\%$ or more of the
PPs involved have $\delta_f=5$.
Finally, we observe that when $p>4007$, all such $f$ have $\delta_f=5$, except
for when $2$ is a non-square.
In fact, the only example of $\delta_f=4$ found shown in those fields is when
$b=3$, which corresponds to our Corollary \ref{co:DU_s2d2b3}. 

When $s=4$, we do the computation for all prime fields of order $p<10000$.
Our bound from Theorem \ref{th:DU_sEven_d2} gives $\delta_f\le 13$.
In stark contrast to the $s=2$ case, here we found only one example of a PP
$f$ that met the bound; specifically, $f(x)=x^4(\eta(x)+1734)$ over
$\ff{3671}$. For this polynomial, one can check that
$|\{x\in \ff{3671}\mid \Delta_{f,1}(x)=2307\}|=13$.
From Table \ref{tb:s4}, it is easily observed that this example is an outlier.
In fact, there are no examples at all of $\delta_f=12$, and very few examples
of $\delta_f=11$ were found (in just $28$ of the $618$ choices of $p$ in our range).
It can also be seen that although the distribution of $\delta_f$ values does
move up as $p$ grows, it does not keep moving towards the upper bound $13$.
Instead, when $p$ is large, the distribution of $\delta_f$ concentrates around
$\delta_f=6$ and $7$.
When $p>5000$, at least $88\%$ of the $f$ have $\delta_f=6$ or $7$.
This suggests that when $p$ is large, even though the upper bound is $13$,
there is a high probability that a randomly chosen PP of the form of $f$ will
have DU only $6$ or $7$. No PPs with a DU of 4 were found over fields of order $p>3323$. 

When $s=6$, we also do the computation for all prime fields of order $p<10000$.
The bound in Theorem \ref{th:DU_sEven_d2} for this case is $\delta_f\le 21$.
Here, we observed no PP example that approached the bound.
Indeed, the largest DU that we observed is only $\delta_f=13$ over $\ff{5903}$.
There are no examples of $\delta_f\ge 14$ for all tested fields $\ff{p}$.
On this evidence, we highly suspect that the bound can be improved, possibly
significantly, for $s\ge 6$. This is not altogether surprising, as 
as $p$ increases it seems more and more unlikely that the worst-case scenarios
that yield our upperbound could all occur at once.
Additionally, as observed in the $s=4$ data, the distribution of $\delta_f$
also concentrates around $\delta_f=6$ and $7$ when $p$ is large.
Although the distribution does not shift towards these values as quickly as in
the $s=4$ case, we still observe that many fields of size $p>5000$ have more
than $90\%$ of the $f$ with $\delta_f=6$ or $7$. No PPs with a DU of 4 were found over fields of order $p>2579$.

\begin{table}[h]
\caption{\textbf{$s=2$ in Theorem \ref{th:DU_sEven_d2}:} The number of PPs $f$ that have each possible $\delta_f$, where $f$ is of the form $f(x)=x^2(\eta(x)+b)\in\ffx{p}$, $2\le b\le (p-1)/2$. }\label{tb:s2}
\begin{tabular}{|c||*{4}{c|}}\hline
\diagbox[height=6mm]{$p$}{$\delta_f$}	& 2 &	3	&	4	&	5			\\\hline
7	&	0	&	1	&	0	&	0		\\\hline
11	&	0	&	2	&	0	&	0		\\\hline
19	&	1	&	1	&	2	&	0		\\\hline
23	&	1	&	3	&	1	&	0		\\\hline
31	&	1	&	1	&	4	&	1		\\\hline
43	&	0	&	5	&	5	&	0		\\\hline
47	&	0	&	5	&	6	&	0		\\\hline
59	&	0	&	5	&	8	&	1		\\\hline
67	&	0	&	5	&	9	&	2		\\\hline
71	&	0	&	6	&	8	&	3		\\\hline
79	&	0	&	3	&	13	&	3		\\\hline
83	&	0	&	7	&	12	&	1		\\\hline
103	&	0	&	7	&	14	&	4		\\\hline
107	&	0	&	9	&	12	&	5		\\\hline
\multicolumn{5}{|c|}{$\vdots$}											\\\hline
1423	&	0	&	0	&	15	&	340		\\\hline
1427	&	0	&	0	&	17	&	339		\\\hline
1439	&	0	&	0	&	17	&	342		\\\hline
1447	&	0	&	0	&	22	&	339 	\\\hline
1451	&	0	&	0	&	20	&	342		\\\hline
1459	&	0	&	0	&	16	&	348		\\\hline
1471	&	0	&	0	&	15	&	352		\\\hline
1483	&	0	&	0	&	8	&	362		\\\hline
1487	&	0	&	0	&	12	&	359		\\\hline
\end{tabular}
\quad
\begin{tabular}{|c||*{4}{c|}}\hline
\diagbox[height=6mm]{$p$}{$\delta_f$}	& 2 &	3	&	4	&	5			\\\hline
3931	&	0	&	0	&	2	&	980	\\\hline
3943	&	0	&	0	&	0	&	985	\\\hline
3947	&	0	&	0	&	2	&	984	\\\hline
3967	&	0	&	0	&	0	&	991	\\\hline
4003	&	0	&	0	&	2	&	998		\\\hline
4007	&	0	&	0	&	1	&	1000	\\\hline
4019	&	0	&	0	&	1	&	1003	\\\hline
4027	&	0	&	0	&	1	&	1005	\\\hline
4051	&	0	&	0	&	1	&	1011	\\\hline
4079	&	0	&	0	&	0	&	1019	\\\hline
4091	&	0	&	0	&	1	&	1021	\\\hline
4099	&	0	&	0	&	1	&	1023	\\\hline
4111	&	0	&	0	&	0	&	1027	\\\hline
4127	&	0	&	0	&	0	&	1031	\\\hline
\multicolumn{5}{|c|}{$\vdots$}											\\\hline
6899	& 0	&0&	1&	1723\\\hline
6907	&	0	&	0	&	1	&	1725	\\\hline
6911	&	0	&	0	&	0	&	1727	\\\hline
6947	&	0	&	0	&	1	&	1735	\\\hline
6959	&	0	&	0	&	0	&	1739	\\\hline
6967	&	0	&	0	&	0	&	1741	\\\hline
6971	&	0	&	0	&	1	&	1741	\\\hline
6983	&	0	&	0	&	0	&	1745	\\\hline
6991	&	0	&	0	&	0	&	1747	\\\hline
\end{tabular}
\end{table}
\begin{table}
\caption{\textbf{$s=4$ in Theorem \ref{th:DU_sEven_d2}:} The number of PPs $f$ that have each possible $\delta_f$, where $f$ is of the form $f(x)=x^4(\eta(x)+b)\in\ffx{p}$, $2\le b\le (p-1)/2$. }\label{tb:s4}
\begin{tabular}{|c||*{12}{c|}}\hline
\diagbox[height=6mm]{$p$}{$\delta_f$}	& 2 &	3	&	4	&	5	&	6	&	7	&	8	&	9	&	10	&	11	&	12	&	13		\\\hline
7	&	1	&	0	&	0	&	0	&	0	&	0	&	0	&	0	&	0	&	0	&	0	&	0	\\\hline
11	&	0	&	2	&	0	&	0	&	0	&	0	&	0	&	0	&	0	&	0	&	0	&	0	\\\hline
19	&	1	&	2	&	1	&	0	&	0	&	0	&	0	&	0	&	0	&	0	&	0	&	0	\\\hline
23	&	0	&	4	&	1	&	0	&	0	&	0	&	0	&	0	&	0	&	0	&	0	&	0	\\\hline
31	&	0	&	6	&	1	&	0	&	0	&	0	&	0	&	0	&	0	&	0	&	0	&	0	\\\hline
43	&	0	&	6	&	3	&	1	&	0	&	0	&	0	&	0	&	0	&	0	&	0	&	0	\\\hline
47	&	0	&	6	&	5	&	0	&	0	&	0	&	0	&	0	&	0	&	0	&	0	&	0	\\\hline
59	&	0	&	3	&	7	&	3	&	1	&	0	&	0	&	0	&	0	&	0	&	0	&	0	\\\hline
67	&	0	&	6	&	7	&	3	&	0	&	0	&	0	&	0	&	0	&	0	&	0	&	0	\\\hline
71	&	0	&	5	&	8	&	3	&	0	&	0	&	1	&	0	&	0	&	0	&	0	&	0	\\\hline
79	&	0	&	4	&	8	&	4	&	2	&	0	&	1	&	0	&	0	&	0	&	0	&	0	\\\hline
83	&	0	&	0	&	14	&	4	&	1	&	1	&	0	&	0	&	0	&	0	&	0	&	0	\\\hline
103	&	0	&	6	&	13	&	6	&	0	&	0	&	0	&	0	&	0	&	0	&	0	&	0	\\\hline
107	&	0	&	0	&	15	&	8	&	3	&	0	&	0	&	0	&	0	&	0	&	0	&	0	\\\hline
127	&	0	&	1	&	17	&	12	&	1	&	0	&	0	&	0	&	0	&	0	&	0	&	0	\\\hline
\multicolumn{13}{|c|}{$\vdots$}																									\\\hline
3319	&	0	&	0	&	0	&	123	&	525	&	148	&	24	&	9	&	0	&	0	&	0	&	0	\\\hline
3323	&	0	&	0	&	2	&	172	&	510	&	127	&	19	&	0	&	0	&	0	&	0	&	0	\\\hline
3331	&	0	&	0	&	0	&	113	&	523	&	170	&	24	&	2	&	0	&	0	&	0	&	0	\\\hline
\multicolumn{13}{|c|}{$\vdots$}																									\\\hline
3631	&	0	&	0	&	0	&	143	&	543	&	189	&	31	&	1	&	0	&	0	&	0	&	0	\\\hline
3643	&	0	&	0	&	0	&	101	&	561	&	215	&	29	&	4	&	0	&	0	&	0	&	0	\\\hline
3659	&	0	&	0	&	0	&	89	&	575	&	214	&	32	&	3	&	1	&	0	&	0	&	0	\\\hline
3671	&	0	&	0	&	0	&	87	&	566	&	224	&	34	&	5	&	0	&	0	&	0	&	1	\\\hline
3691	&	0	&	0	&	0	&	119	&	591	&	188	&	24	&	0	&	0	&	0	&	0	&	0	\\\hline
3719	&	0	&	0	&	0	&	81	&	587	&	229	&	30	&	2	&	0	&	0	&	0	&	0	\\\hline
3727	&	0	&	0	&	0	&	134	&	578	&	190	&	28	&	1	&	0	&	0	&	0	&	0	\\\hline
\multicolumn{13}{|c|}{$\vdots$}																									\\\hline
5003	&	0	&	0	&	0	&	79	&	750	&	364	&	51	&	6	&	0	&	0	&	0	&	0	\\\hline
5011	&	0	&	0	&	0	&	91	&	805	&	301	&	47	&	6	&	2	&	0	&	0	&	0	\\\hline
5023	&	0	&	0	&	0	&	56	&	764	&	363	&	65	&	7	&	0	&	0	&	0	&	0	\\\hline
5039	&	0	&	0	&	0	&	81	&	779	&	339	&	59	&	1	&	0	&	0	&	0	&	0	\\\hline
5051	&	0	&	0	&	0	&	67	&	758	&	374	&	59	&	3	&	1	&	0	&	0	&	0	\\\hline
\multicolumn{13}{|c|}{$\vdots$}																									\\\hline
9839	&	0	&	0	&	0	&	2	&	1079	&	1144	&	219	&	14	&	1	&	0	&	0	&	0	\\\hline
9851	&	0	&	0	&	0	&	2	&	1153	&	1107	&	184	&	13	&	2	&	1	&	0	&	0	\\\hline
9859	&	0	&	0	&	0	&	16	&	1306	&	988	&	138	&	15	&	0	&	1	&	0	&	0	\\\hline
9871	&	0	&	0	&	0	&	15	&	1158	&	1096	&	178	&	19	&	1	&	0	&	0	&	0	\\\hline
9883	&	0	&	0	&	0	&	6	&	1090	&	1138	&	220	&	16	&	0	&	0	&	0	&	0	\\\hline
9887	&	0	&	0	&	0	&	9	&	1189	&	1076	&	168	&	29	&	0	&	0	&	0	&	0	\\\hline
9907	&	0	&	0	&	0	&	1	&	1002	&	1213	&	221	&	34	&	5	&	0	&	0	&	0	\\\hline
9923	&	0	&	0	&	0	&	15	&	1185	&	1076	&	187	&	17	&	0	&	0	&	0	&	0	\\\hline
9931	&	0	&	0	&	0	&	4	&	1003	&	1236	&	214	&	23	&	2	&	0	&	0	&	0	\\\hline
9967	&	0	&	0	&	0	&	20	&	1328	&	1000	&	130	&	11	&	2	&	0	&	0	&	0	\\\hline
\end{tabular}
\end{table}
\begin{table}
\caption{\textbf{$s=6$ in Theorem \ref{th:DU_sEven_d2}:} The number of PPs $f$ that have each possible $\delta_f$, where $f$ is of the form $f(x)=x^6(\eta(x)+b)\in\ffx{p}$, $2\le b\le (p-1)/2$. }\label{tb:s6}
\begin{tabular}{|c||*{13}{c|}}\hline
\diagbox[height=6mm]{$p$}{$\delta_f$}	& 2 &	3	&	4	&	5	&	6	&	7	&	8	&	9	&	10	&	11	&	12	&	13	&	$\ge 14$	\\\hline
11	&	0	&	2	&	0	&	0	&	0	&	0	&	0	&	0	&	0	&	0	&	0	&	0	&	0	\\\hline
23	&	1	&	3	&	1	&	0	&	0	&	0	&	0	&	0	&	0	&	0	&	0	&	0	&	0	\\\hline
47	&	0	&	6	&	5	&	0	&	0	&	0	&	0	&	0	&	0	&	0	&	0	&	0	&	0	\\\hline
59	&	0	&	4	&	8	&	2	&	0	&	0	&	0	&	0	&	0	&	0	&	0	&	0	&	0	\\\hline
71	&	0	&	1	&	8	&	5	&	0	&	1	&	2	&	0	&	0	&	0	&	0	&	0	&	0	\\\hline
83	&	0	&	2	&	14	&	4	&	0	&	0	&	0	&	0	&	0	&	0	&	0	&	0	&	0	\\\hline
107	&	0	&	3	&	16	&	5	&	2	&	0	&	0	&	0	&	0	&	0	&	0	&	0	&	0	\\\hline
131	&	0	&	3	&	16	&	10	&	2	&	1	&	0	&	0	&	0	&	0	&	0	&	0	&	0	\\\hline
167	&	0	&	2	&	26	&	7	&	5	&	1	&	0	&	0	&	0	&	0	&	0	&	0	&	0	\\\hline
179	&	0	&	3	&	26	&	14	&	1	&	0	&	0	&	0	&	0	&	0	&	0	&	0	&	0	\\\hline
191	&	0	&	1	&	29	&	15	&	2	&	0	&	0	&	0	&	0	&	0	&	0	&	0	&	0	\\\hline
\multicolumn{14}{|c|}{$\vdots$}\\\hline																											
2411	&	0	&	0	&	1	&	220	&	313	&	65	&	3	&	0	&	0	&	0	&	0	&	0	&	0	\\\hline
2423	&	0	&	0	&	0	&	91	&	374	&	115	&	20	&	5	&	0	&	0	&	0	&	0	&	0	\\\hline
2447	&	0	&	0	&	0	&	148	&	352	&	95	&	13	&	3	&	0	&	0	&	0	&	0	&	0	\\\hline
2459	&	0	&	0	&	0	&	138	&	374	&	82	&	19	&	1	&	0	&	0	&	0	&	0	&	0	\\\hline
2531	&	0	&	0	&	0	&	167	&	358	&	86	&	19	&	2	&	0	&	0	&	0	&	0	&	0	\\\hline
2543	&	0	&	0	&	0	&	122	&	349	&	130	&	26	&	7	&	1	&	0	&	0	&	0	&	0	\\\hline
2579	&	0	&	0	&	1	&	166	&	375	&	89	&	13	&	0	&	0	&	0	&	0	&	0	&	0	\\\hline
\multicolumn{14}{|c|}{$\vdots$}\\\hline																											
5003	&	0	&	0	&	0	&	108	&	750	&	353	&	37	&	2	&	0	&	0	&	0	&	0	&	0	\\\hline
5039	&	0	&	0	&	0	&	95	&	785	&	332	&	37	&	8	&	2	&	0	&	0	&	0	&	0	\\\hline
5051	&	0	&	0	&	0	&	13	&	480	&	591	&	135	&	35	&	5	&	3	&	0	&	0	&	0	\\\hline
5087	&	0	&	0	&	0	&	61	&	817	&	331	&	61	&	1	&	0	&	0	&	0	&	0	&	0	\\\hline
5099	&	0	&	0	&	0	&	129	&	777	&	318	&	46	&	4	&	0	&	0	&	0	&	0	&	0	\\\hline
\multicolumn{14}{|c|}{$\vdots$}\\\hline																											
5867	&	0	&	0	&	0	&	91	&	936	&	392	&	41	&	6	&	0	&	0	&	0	&	0	&	0	\\\hline
5879	&	0	&	0	&	0	&	32	&	729	&	587	&	109	&	12	&	0	&	0	&	0	&	0	&	0	\\\hline
5903	&	0	&	0	&	0	&	21	&	574	&	636	&	179	&	50	&	10	&	2	&	2	&	1	&	0	\\\hline
5927	&	0	&	0	&	0	&	38	&	854	&	508	&	72	&	8	&	1	&	0	&	0	&	0	&	0	\\\hline
5939	&	0	&	0	&	0	&	41	&	827	&	526	&	73	&	16	&	1	&	0	&	0	&	0	&	0	\\\hline
\multicolumn{14}{|c|}{$\vdots$}\\\hline																											
9491	&	0	&	0	&	0	&	3	&	736	&	1273	&	301	&	54	&	4	&	0	&	1	&	0	&	0	\\\hline
9539	&	0	&	0	&	0	&	9	&	944	&	1148	&	251	&	30	&	2	&	0	&	0	&	0	&	0	\\\hline
9551	&	0	&	0	&	0	&	15	&	1173	&	1015	&	156	&	26	&	2	&	0	&	0	&	0	&	0	\\\hline
9587	&	0	&	0	&	0	&	18	&	1149	&	1008	&	199	&	21	&	1	&	0	&	0	&	0	&	0	\\\hline
9623	&	0	&	0	&	0	&	15	&	1181	&	1045	&	143	&	19	&	2	&	0	&	0	&	0	&	0	\\\hline
9719	&	0	&	0	&	0	&	2	&	761	&	1290	&	317	&	54	&	5	&	0	&	0	&	0	&	0	\\\hline
9743	&	0	&	0	&	0	&	12	&	1175	&	1069	&	161	&	13	&	4	&	1	&	0	&	0	&	0	\\\hline
9767	&	0	&	0	&	0	&	5	&	1028	&	1182	&	197	&	25	&	4	&	0	&	0	&	0	&	0	\\\hline
9791	&	0	&	0	&	0	&	21	&	1247	&	994	&	159	&	23	&	3	&	0	&	0	&	0	&	0	\\\hline
9803	&	0	&	0	&	0	&	6	&	1111	&	1120	&	184	&	26	&	3	&	0	&	0	&	0	&	0	\\\hline
9839	&	0	&	0	&	0	&	1	&	879	&	1287	&	258	&	29	&	5	&	0	&	0	&	0	&	0	\\\hline
9851	&	0	&	0	&	0	&	5	&	1051	&	1169	&	214	&	21	&	2	&	0	&	0	&	0	&	0	\\\hline
9887	&	0	&	0	&	0	&	25	&	1186	&	1049	&	189	&	17	&	5	&	0	&	0	&	0	&	0	\\\hline
9923	&	0	&	0	&	0	&	12	&	1077	&	1174	&	193	&	20	&	4	&	0	&	0	&	0	&	0	\\\hline
\end{tabular}
\end{table}

\end{document}